\documentclass[12pt,reqno]{amsart}

\usepackage{fullpage,amsmath,amssymb,amsfonts,mathrsfs,bm,graphicx,hyperref}

\def\bbone{{\mathchoice {\rm 1\mskip-4mu l} {\rm 1\mskip-4mu l}
{\rm 1\mskip-4.5mu l} {\rm 1\mskip-5mu l}}}

\newtheorem{theorem}{Theorem}[section]

\newtheorem{corollary}{Corollary}[section]
\newtheorem{lemma}{Lemma}[section]
\newtheorem{proposition}{Proposition}[section]

\begin{document}

\author{Abdelmalek Abdesselam}
\address{Abdelmalek Abdesselam, Department of Mathematics,
P. O. Box 400137,
University of Virginia,
Charlottesville, VA 22904-4137, USA}
\email{malek@virginia.edu}

\title{A central limit theorem for connected components of random coverings of manifolds with nilpotent fundamental groups}

\begin{abstract}
There is a well understood way of generating random coverings of a fixed manifold by sampling homomorphisms from the fundamental group of this manifold into the symmetric group. We prove a central limit theorem for the number of connected components of these random coverings when the fundamental group is nilpotent. This provides a nonabelian generalization of an earlier result by the author and Shannon Starr in the case of the torus where the fundamental group is a free abelian group of rank at least two.
Our result relies on the work of du Sautoy and Grunewald on the subgroup growth zeta functions of nilpotent groups, and on Delange's generalization of the Wiener-Ikehara Tauberian theorem.
\end{abstract}

\maketitle

\tableofcontents

\section{Introduction}\label{introsec}

Let $X$ be a fixed nonempty topological space. As is customary in the theory of the fundamental group and covering spaces, we will assume that $X$ is path-connected, locally path-connected, and semilocally simply connected. We pick a basepoint $x_0\in X$ and denote the fundamental group based at $x_0$ by $G=\pi_1(X,x_0)$. There is a well known correspondence, in fact equivalence of categories, between topological coverings $\pi:Y\rightarrow X$ and left actions $L:G\times E\rightarrow E$ of the group $G$ on some set $E$ (see~\cite[pp. 68--70]{Hatcher} and~\cite[Thm. 2.3.4]{Szamuely}). The precise correspondence as well as other useful definitions will be recalled in \S\ref{prelimsec}, for the benefit of the reader who may not be an expert in all mathematical areas relevant to the present article: probability theory, group theory, topology, and number theory. We will only consider finite covers, where $(Y,\pi)$ is $n$-sheeted, with $n$ some nonnegative integer. The corresponding set $E$ is finite with $|E|=n$, where we use the notation $|\cdot|$ for the cardinality of finite sets. The number of connected components of the cover will be denoted by $c(Y,\pi)$ and it is equal to the number of orbits for the $L$ action of $G$ on $E$ which we will similarly denote by $c(E,L)$. Such an action is the same as a group homomorphism $G\rightarrow\mathfrak{S}_{E}$ into the group $\mathfrak{S}_E$ of bijections $E\rightarrow E$ which is the symmetric group $\mathfrak{S}_n$ if $E=[n]:=\{1,2,\ldots,n\}$.
We now add the hypothesis that $G$ is finitely generated, for instance if one requires $X$ to be Hausdorff compact. Then the set of homomorphisms ${\rm Hom}(G,\mathfrak{S}_{n})$ is finite. By randomly sampling an element $\varphi\in{\rm Hom}(G,\mathfrak{S}_{n})$ we immediately get a notion of random cover $(Y,\pi)$ for $X$, as well as a notion of random manifold $Y$. This kind of model for random manifolds has received much recent attention, e.g., in the articles~\cite{BakerP,MageeN,MageeNP,MageeP,MageePvH}.
Although concerned with enumeration of coverings rather than probabilistic models, similar constructions were used in~\cite{Dijkgraaf,EskinO} (for ramified covers where $X$ is a space minus the ramification locus). For other related work on the enumeration of covers, see also~\cite{LiskovetsM} and the review~\cite{KwakLM}.

A natural problem to consider in this context, is the asymptotic study when $n\rightarrow \infty$ of the random variables given by the Betti numbers of these random manifolds given as covers of a fixed space $X$. Results of this kind are very sensitive to the nature of the group $G$, and in particular its subgroup growth properties~\cite{LubotzkyS}. For instance, \cite{BakerP} 
establish such results for groups which are central extensions of free products of finite cyclic groups. In this article, we establish a central limit theorem (CLT) for the zero-th Betti number $c(Y,\pi)$ of these random manifolds, under the hypothesis that $G$ is close to commutative, i.e., is nilpotent together with some mild technical conditions.
Our main theorem generalizes the one of~\cite{AbdesselamS} which corresponds to the $\ell$-dimensional torus $X=\mathbb{T}^{\ell}=\mathbb{R}^{\ell}/\mathbb{Z}^{\ell}$, where $G=\mathbb{Z}^{\ell}$. This article follows the same proof strategy as in~\cite{AbdesselamS}, with additional ingredients provided by the work of du Sautoy and Grunewald on subgroup growth for nilpotent groups~\cite{duSautoyG}, as well as Delange's generalization of the Wiener-Ikehara Tauberian theorem (see~\cite{PierceTBZ}).

For $n\ge 0$, and $0\le k\le n$, we denote by $A(G,n,k)$ the cardinality of the set of homomorphisms $\varphi\in{\rm Hom}(G,\mathfrak{S}_{n})$ for which $c([n],L)=k$, where $L$ is the left-action given by $\varphi$, namely, $L:G\times[n]\rightarrow [n]$, $(g,i)\mapsto L(g,i)=\sigma(i)$ with the permutation $\sigma=\varphi(g)$. With another slight abuse of notation, we will denote $c(\varphi):=c([n],L)$.
In terms of a new variable $x$, we define the polynomial
\[
\mathcal{H}_{G,n}(x)=\frac{1}{n!}\sum_{k=0}^{n}A(G,n,k) x^k\ .
\]
When $x>0$ is a fixed positive real number, we can define the probability mass function
$\mathbb{P}_{G,n,x}$ on the set ${\rm Hom}(G,\mathfrak{S}_{n})$ by
\[
\mathbb{P}_{G,n,x}(\varphi)=\frac{x^{c(\varphi)}}{n!\mathcal{H}_{G,n}(x)}\ .
\]
While the above-mentioned work on random covers typically considers uniform sampling, i.e., $x=1$, we study the more general biased measures $\mathbb{P}_{G,n,x}$ which are anlogues of the Ewens measure on random permutations (the case of the circle $X=S^1$ with $G=\mathbb{Z}$). When the homomorphism $\varphi$ is sampled according to $\mathbb{P}_{G,n,x}$, this gives rise to the random variable $K_{G,n}:=c(\varphi)=c([n],L)=c(Y,\pi)$. This is the random number of connected components or orbits for which we prove a CLT. In order to state the latter, we need to quickly recall some definitions and results from the theory of subgroup growth~\cite{LubotzkyS}, with more details provided in \S\ref{prelimsec}.

For a group $G$, and for integers $n\ge 1$, we let $a_n(G):=\{H\leq G\ |\ [G:H]=n\}$ 
which counts the number of subgroups (not necessarily normal) of index exactly $n$. When $G$ is finitely generated, we have $a_n(G)<\infty$, for all $n$.
The group $G$ is called a $\mathscr{T}$-group, when it is finitely generated, torsion-free, and nilpotent. In this case, the $a_n(G)$ grow at most polynomially in $n$, and this allows one to define the Dirichlet series, introduced in~\cite{GrunewaldSS},
\[
\zeta_G(s):=\sum_{n=1}^{\infty}\frac{a_n(G)}{n^s}\ ,
\]
the subgroup growth zeta function of the group $G$ which generalizes the Riemann zeta function corresponding to $G=\mathbb{Z}$.
This Dirichlet series converges in the half-plane ${\rm Re}(s)>\alpha_G\ge 0$, where
\[
\alpha_G:=\limsup\limits_{n\rightarrow\infty}\frac{\ln\left(\sum_{j=1}^{n}a_j(G)\right)}{\ln n}\ .
\]
When $\alpha_G>0$, this quantity is the same as the abscissa of convergence of the Dirichlet series, i.e., the infimum of the set of real numbers $\sigma$ such that the series converges for ${\rm Re}(s)>\sigma$. Since the coefficients $a_n(G)$ are obviously nonnegative, this also coincides with the abscissa of absolute convergence, which is defined similarly with the convergence requirement replaced by that of absolute convergence.
Since this does not seem to be a standard definition, let us say that a finitely generated group has at least linear subgroup growth if
\[
\exists c>0, \forall n\ge 1, a_n(G)\ge c n\ .
\]
For such a group, we must have $\alpha_G\ge 2$.
The following is a deep result by du Sautoy and Grunewald~\cite{duSautoyG}.

\begin{theorem}\label{dSGthm}
Let $G$ be an infinite $\mathscr{T}$-group, then $\alpha_G$ is a rational number, and there exists $\delta>0$ such that $\zeta_G(s)$ admits a meromorphic continuation to the domain ${\rm Re}(s)>\alpha_G-\delta$. Moreover, $\zeta_G(s)$ has a pole at $s=\alpha_G$, and no other singularity on the line ${\rm Re}(s)=\alpha_G$.
\end{theorem}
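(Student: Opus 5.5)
This is the du Sautoy--Grunewald theorem, so I would reconstruct the strategy of their paper rather than build something new. There are four stages: write $\zeta_G$ as an Euler product of $p$-adic cone integrals; express each factor as a rational function using resolution of singularities; assemble the global function from finitely many Artin $L$-functions; and read off the abscissa and the poles.

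\textbf{Step 1: Euler product and cone integrals.} Since $G$ is a $\mathscr{T}$-group, it is residually nilpotent and its finite-index subgroups are controlled by their pro-$p$ completions. This gives the Euler product $\zeta_G(s)=\prod_p \zeta_{G,p}(s)$, where $\zeta_{G,p}(s)=\sum_{k}a_{p^k}(G)p^{-ks}$. Choose a Mal'cev basis of $G$, or pass to the associated $\mathbb{Z}$-Lie ring. Following Grunewald--Segal--Smith, $\zeta_{G,p}(s)$ becomes a $p$-adic integral over triangular matrices $M\in\mathrm{Tr}_d(\mathbb{Z}_p)$. The integrand is a product of powers $|m_{ii}|^{s-i}$, and the domain is cut out by divisibility conditions $v(f_i(M))\ge v(g_i(M))$ with $f_i,g_i$ polynomials over $\mathbb{Z}$. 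This is a \emph{cone integral}, and the defining data are independent of $p$.

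\textbf{Step 2: Uniform rationality via resolution.} Apply Hironaka resolution over $\mathbb{Q}$ to the product of all the $f_i,g_i$, and reduce modulo $p$ for almost all $p$, namely the primes of good reduction. Denef's method then expresses $\zeta_{G,p}(s)$ as a finite sum over subsets $I$ of exceptional divisors:
\[
\zeta_{G,p}(s)=\sum_I c_{I}(p)\sum_{\text{cone}}\prod_{i\in I}\frac{p^{-(a_i s-b_i)}}{1-p^{-(a_i s-b_i)}}.
\]
Here $c_I(p)$ counts $\mathbb{F}_p$-points of the strata $E_I^\circ$, and the inner sum runs over the lattice points of the rational polyhedral cones describing the conditions, decomposed into simplicial pieces. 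The finitely many bad primes contribute factors that are rational in $p^{-s}$. Those factors are holomorphic and nonvanishing for $\mathrm{Re}(s)$ large, and I would handle them separately: their poles lie on vertical lines indexed by finitely many rational numbers.

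\textbf{Step 3: Global assembly and the abscissa.} Write each point count as $c_I(p)=\sum_j \epsilon_j\,\mathrm{tr}(\mathrm{Frob}_p\mid\cdot)$ via Lang--Weil and the Weil conjectures. The leading terms come from top-dimensional components, and they are governed by the number of geometrically irreducible components defined over $\mathbb{F}_p$; Chebotarev makes this an average of Artin characters. Expand $\zeta_{G,p}(s)=1+\sum_{(a,b)}c_{a,b}(p)p^{b-as}+\dots$. The candidate abscissa is then $\alpha_G=\max\{(b+1)/a\}$ over the terms with nonvanishing leading count, which is visibly rational. Comparing with a product of Artin $L$-functions $\prod L(as-b-1,\chi)$, we get $\zeta_G(s)=\prod L(\cdots)^{\pm}\cdot W(s)$, where $W$ is an Euler product converging absolutely on $\mathrm{Re}(s)>\alpha_G-\delta$. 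Artin $L$-functions continue meromorphically and are nonvanishing on $\mathrm{Re}=1$. This yields the meromorphic continuation, a pole at $\alpha_G$ coming from the trivial-character factor $\zeta(as-b-1)$ whose multiplicity is positive, and no other singularities on the line, since $L(1+it,\chi)\ne0$ and is finite for $t\ne0$ or $\chi$ nontrivial.

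\textbf{Main obstacle.} The hardest step is the end of Step 3. I must show that the terms attaining the maximal ratio really produce a pole, i.e.\ that the total exponent of $\zeta$ is positive rather than cancelled by negative contributions. I must also control the error from Lang--Weil uniformly so that $W$ converges beyond $\alpha_G$. My first attempt at positivity would use that $a_n(G)\ge0$ forces the sum of coefficients at the dominant term to be positive: the trivial-character multiplicity equals the average of $c_{a,b}(p)$ over $p$, which is nonnegative and nonzero since $\alpha_G$ is the genuine abscissa. Landau's theorem on Dirichlet series with nonnegative coefficients then shows that $s=\alpha_G$ is indeed singular.
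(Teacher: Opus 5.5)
The paper gives no proof of this statement; it quotes it from du Sautoy--Grunewald. Your four stages (Grunewald--Segal--Smith integral, cone integrals, Denef's formula after resolution, Lang--Weil plus Artin $L$-functions) follow the architecture of their proof.

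As written, though, your sketch has a genuine gap at the finitely many bad primes, and it affects exactly the clause ``no other singularity on the line ${\rm Re}(s)=\alpha_G$''. A bad local factor $\zeta_{G,p}(s)$ is a rational function of $p^{-s}$, hence invariant under $s\mapsto s+2\pi i/\ln p$. If it had a pole $s_0$ with ${\rm Re}(s_0)=\alpha_G$, it would have poles at every $s_0+2\pi i k/\ln p$, $k\in\mathbb{Z}$. Then $\zeta_G$ would inherit infinitely many singularities on that line, because the zeros of the remaining factors there come in finitely many progressions of period $2\pi i/\ln q$ with $q\neq p$, each meeting yours in at most one point. So ``holomorphic for ${\rm Re}(s)$ large, with poles on finitely many rational vertical lines'' does not suffice. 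You must prove that the abscissa of convergence of $\zeta_{G,p}$ is \emph{strictly} less than $\alpha_G$ for each bad $p$. Nonnegativity of coefficients only gives $\le$, since $\zeta_G(\sigma)\ge\zeta_{G,p}(\sigma)$ for real $\sigma$.

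One way to get strictness is to run Denef's argument over $\mathbb{Q}_p$ with the same resolution defined over $\mathbb{Q}$. The unit factors then only translate the polyhedra, so the possible poles of $\zeta_{G,p}$ lie on lines ${\rm Re}(s)=-B/A$. These come from generators of the same recession cones as at good primes, attached to strata that are nonempty over $\mathbb{Q}$. Such a generator, read as a lattice point with all coordinates positive in the cone of the stratum indexed by its support, contributes $(1-B)/A>-B/A$ to the global abscissa.

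Your positivity paragraph also needs repair. ``Nonzero since $\alpha_G$ is the genuine abscissa'' is circular: identifying your candidate $\max\{(b+1)/a\}$ with the true abscissa is exactly what positivity is supposed to deliver. The non-circular reason runs as follows.
\begin{itemize}
\item Every coefficient in the Denef--cone expansion (before clearing denominators) is nonnegative.
\item The leading coefficient of a dominant term counts Frobenius-fixed geometric components of a stratum that is nonempty over $\mathbb{Q}$. It is therefore a permutation character of a Galois group acting on a nonempty set.
\item Its mean over Frobenius classes (Chebotarev), which is the multiplicity of the trivial character, equals the number of Galois orbits, hence is at least $1$.
\end{itemize}
This gives a pole of positive order at the candidate, so the candidate is the abscissa, and Landau's theorem becomes superfluous.

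Finally, the Euler product rests on finite quotients of a nilpotent group being direct products of their Sylow subgroups. Residual nilpotence alone does not give it; free groups are residually nilpotent.
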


Assuming the conclusions of the theorem hold, we let $m_{G}\ge 1$ denote the order of the pole at $\alpha_G$, and let $\gamma_G$ denote the coefficient of the most singular term $\frac{1}{(s-\alpha_G)^{m_G}}$ for the Laurent expansion of $\zeta_G(s)$ at $\alpha_G$. Clearly $\gamma_G=\lim_{s\rightarrow\alpha_G}(s-\alpha_G)^{m_{G}}\zeta_{G}(s)>0$. We can now define the constant
\[
\mathcal{K}_G=\frac{\Gamma(\alpha_G)\gamma_G}{(m_G-1)!}\ ,
\]
where $\Gamma(s)$ is the Euler gamma function.
We can now state our main theorem.

\begin{theorem}\label{mainthm}
Suppose $G$ is a $\mathscr{T}$-group with at least linear subgroup growth, 
then the leading asymptotics of the mean and variance of the random variables $\mathsf{K}_{G,n}$ are given by
\begin{eqnarray*}
\mathbb{E}\mathsf{K}_{G,n}&\sim & 
\frac{1}{\alpha_G-1}\times{\alpha_G}^{-\left(\frac{m_G-1}{\alpha_G}\right)}
\times(x\mathcal{K}_G)^{\frac{1}{\alpha_G}}
\times n^{\frac{\alpha_G-1}{\alpha_G}}\times(\ln n)^{\frac{m_G-1}{\alpha_G}}
\ , \\
{\rm Var}(\mathsf{K}_{G,n}) & \sim & 
\frac{1}{\alpha_G(\alpha_G-1)}\times{\alpha_G}^{-\left(\frac{m_G-1}{\alpha_G}\right)}
\times(x\mathcal{K}_G)^{\frac{1}{\alpha_G}}
\times n^{\frac{\alpha_G-1}{\alpha_G}}\times(\ln n)^{\frac{m_G-1}{\alpha_G}}
\ . 
\end{eqnarray*}
Moreover, the normalized random variables
\[
\frac{\mathsf{K}_{G,n}-\mathbb{E}\mathsf{K}_{G,n}}{\sqrt{{\rm Var}(\mathsf{K}_{G,n})}}
\]
converge in distribution and in the sense of moments to the standard Gaussian $\mathcal{N}(0,1)$. Namely, we have
\[
\lim\limits_{n\rightarrow\infty}\mathbb{E}\left[f\left(
\frac{\mathsf{K}_{G,n}-\mathbb{E}\mathsf{K}_{G,n}}{\sqrt{{\rm Var}(\mathsf{K}_{G,n})}}\right)\right]
=\frac{1}{\sqrt{2\pi}}\int\limits_{-\infty}^{\infty}f(s)\ e^{-\frac{s^2}{2}}\ {\rm d}s\ ,
\]
for all $f$'s that are bounded continuous functions, or polynomials.
\end{theorem}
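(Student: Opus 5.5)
Following the approach of~\cite{AbdesselamS}, the starting point is the exponential formula for $G$-sets. A transitive action of $G$ on $[n]$ corresponds to a subgroup $H$ of index $n$ (the stabilizer of the point $1$) together with a labeling of the cosets that sends $H$ to $1$; there are $(n-1)!$ such labelings. The number of transitive actions on $[n]$ is therefore $(n-1)!\,a_n(G)$. The exponential formula then gives the generating-function identity
\[
\sum_{n\ge 0}\mathcal{H}_{G,n}(x)\,z^n=\exp\Bigl(x\sum_{j\ge1}\frac{a_j(G)}{j}z^j\Bigr).
\]
So $\mathsf{K}_{G,n}$ is distributed as the number of components of a weighted random assembly: it is a sum of independent variables $\mathsf{K}=\sum_j Z_j$, with $Z_j$ Poisson of mean $x a_j(G) r^j/j$, conditioned on $\sum_j jZ_j=n$. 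The mean, variance and higher cumulants can be read off from the $\theta$-derivatives of $\log\mathcal{H}_{G,n}(xe^{\theta})$.

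I would carry out a saddle-point (Hayman-type) analysis as in the torus case. First I need the asymptotics of the partial sums $\sum_{j\le N}a_j(G)$. Theorem~\ref{dSGthm} supplies a meromorphic continuation past $\mathrm{Re}(s)=\alpha_G$ with a single pole of order $m_G$ on that line. Delange's Tauberian theorem then gives
\[
\sum_{j\le N}a_j\sim\frac{\gamma_G}{\alpha_G (m_G-1)!}N^{\alpha_G}(\ln N)^{m_G-1}.
\]
From this, a Mellin or Abelian argument yields, as $r\to1^-$ with $t=-\ln r$,
\[
F(r):=\sum_j\frac{a_j}{j}r^j\sim \mathcal{K}_G\,t^{-(\alpha_G-1)}\,\bigl(\ln(1/t)\bigr)^{m_G-1}\!/\alpha_G\cdot(\text{const}),
\]
and similarly for the derivatives $(r\partial_r)^k F$. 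The Gamma factor comes from $\Gamma(\alpha_G)$, and I would fix the constants carefully. The saddle point $r_n$ solves $x\,r F'(r)=n$. Solving asymptotically gives $t_n\approx (x\mathcal{K}_G)^{1/\alpha_G}n^{-1/\alpha_G}(\ln n)^{(m_G-1)/\alpha_G}\alpha_G^{-(m_G-1)/\alpha_G}$. Then $\mathbb{E}\mathsf{K}\approx xF(r_n)\approx n t_n/(\alpha_G-1)$. The variance has the conditioned form $xF-(xrF')^2/(x(r\partial_r)^2F)$, which gives the factor $\frac1{\alpha_G-1}-\frac{1}{\alpha_G}\cdot\ldots$. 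This should be checked to produce $\frac{1}{\alpha_G(\alpha_G-1)}$, as stated in Theorem~\ref{mainthm}.

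To make this rigorous, I would write $\mathcal{H}_{G,n}(xe^{\theta})$ as a Cauchy integral on the circle $|z|=r_n(\theta)$. I would then prove a local Gaussian approximation near $\arg z=0$, together with a decay bound on the rest of the circle, uniformly for $\theta$ in a small complex neighborhood of $0$ scaled by $\sigma_n^{-1}$. This yields $\log \mathbb{E}e^{\theta \mathsf{K}}$ asymptotics, and hence convergence of the moment generating function of the normalized variable to $e^{s^2/2}$. That in turn gives convergence in distribution and of all moments (by uniform integrability, or by analyticity via Cauchy estimates on cumulants).

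The main obstacle is the minor-arc estimate. I need $\mathrm{Re}\,F(re^{i\phi})\le F(r)-c\,(\text{something large})$ for $\phi$ away from $0$. This requires the $a_j$ not to concentrate on a sublattice. Here the at-least-linear-growth hypothesis, $a_j\ge cj$, is what I would use: it gives $F(r)-\mathrm{Re}F(re^{i\phi})\ge c\sum_j(1-\cos j\phi)r^j$, which can be evaluated explicitly and is of order $\min(\phi^2 t^{-3},t^{-1})$. That dominates $\ln$ of the integrand's polynomial factors, since $t_n^{-1}$ is a power of $n$. The remaining delicate point is getting derivative asymptotics of $F$ from Delange with only partial-sum information. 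I would handle it by monotonicity-based Abelian arguments, since $(r\partial_r)^kF$ are Laplace–Stieltjes transforms of the measure $\sum a_j\delta_j$.
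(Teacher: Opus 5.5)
Most of your plan coincides with the paper's: the exponential formula, Delange for the partial sums of $a_j(G)$, the saddle point $xW_{G,0}(t)=n$, and the bound from $a_j(G)\ge cj$ away from the real axis. Two of your deviations are harmless. You get the asymptotics of $W_{G,\beta}(t)=\sum_j j^{\beta}a_j(G)e^{-jt}$ from the partial sums by an Abelian (Karamata) argument instead of a second use of Delange. You also move the saddle with $\theta$, whereas the paper keeps $t_n$ fixed and gets the extra factor $e^{-(\alpha_G-1)s^2/2}$ from the drift $i\Theta\lambda_n n(1-e^{s/b_n})$. One correction: the constant should read $F(e^{-t})=W_{G,-1}(t)\sim\frac{\mathcal{K}_G}{\alpha_G-1}t^{-(\alpha_G-1)}(\ln 1/t)^{m_G-1}$, which is what your mean computation actually uses.

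The gap is the decay estimate between the Taylor region and the true minor arc. Write $\phi$ for the angle, as you do.

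From $a_j(G)\ge cj$ you only get $\mathrm{Re}\,q\gtrsim\phi^2t^{-3}$ for $|\phi|\le t$. The true Gaussian scale is $\phi^2/\lambda_n^2\asymp\phi^2t^{-\alpha_G-1}(\ln 1/t)^{m_G-1}$. So your bound makes the integrand negligible only for $|\phi|\gg t^{3/2}$, up to logarithms.

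A local Gaussian approximation by Taylor expansion, with cubic remainder $\lesssim\phi^3W_{G,2}(t)$, is valid only for $|\phi|\ll t^{(\alpha_G+2)/3}$, again up to logarithms. When $\alpha_G>5/2$ the window $t^{(\alpha_G+2)/3}\lesssim|\phi|\lesssim t^{3/2}$ is covered by neither estimate; already $G=\mathbb{Z}^3$, where $\lambda_n\asymp t^2$, is such a case. Bounding the integrand there by $e^{-c\phi^2t^{-3}}$ leaves a contribution of order $t^{3/2}$. That swamps the main term $\asymp\lambda_n\asymp t^{(\alpha_G+1)/2}$. Your scheme closes for $\alpha_G<5/2$ (e.g. the Heisenberg group), but not in general.

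The missing ingredient is the paper's major-arc lower bound, which uses the actual growth of $a_j(G)$ rather than just $a_j(G)\ge cj$:
\begin{itemize}
\item For $|\phi|\le t_n$, keep only the terms $j\le\pi/t_n$.
\item Use $\sin^2(j\phi/2)\ge j^2\phi^2/\pi^2$ and $e^{-jt_n}\ge e^{-\pi}$.
\item This gives $\mathrm{Re}\,q\gtrsim\phi^2\sum_{j\le\pi/t_n}j\,a_j(G)\asymp\phi^2/\lambda_n^2$. The partial sums of $j\,a_j(G)$ come from Delange with $\beta=1$, or by partial summation from your $\beta=0$ result.
\end{itemize}
This yields a Gaussian majorant $e^{-\kappa\Theta^2}$ on all of $|\phi|\le t_n$, so dominated convergence handles the whole central arc. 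The linear-growth bound is then needed only for $|\phi|>t_n$, where it gives $\gtrsim 1/t_n$.

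Separately, the complex-$\theta$ neighbourhood is unnecessary: real $s$ plus Curtiss's continuity theorem, as in the paper, suffices. The complex version would also need minor-arc control of the $\mathrm{Im}\,F$ term, which you have not supplied.
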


The following corollary gives a more practical version of the theorem with a sufficient condition which is easier to check, and which involves the notion of Hirsch length (see \S2 for a refresher). For a polycyclic group $G$, we will denote by $h(G)$ the Hirsch length of $G$. We will also denote by $G^{\rm ab}$ the abelianization of $G$.

\begin{corollary}\label{maincor}
The conclusion of Theorem \ref{mainthm} holds if $G$ is a $\mathscr{T}$-group with $h(G^{\rm ab})\ge 2$.
\end{corollary}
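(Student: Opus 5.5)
It suffices to show that a $\mathscr{T}$-group $G$ with $h(G^{\rm ab})\ge 2$ has at least linear subgroup growth; the corollary then follows directly from Theorem \ref{mainthm}. The plan is to produce, for each $n$, many subgroups of index $n$ by pulling back subgroups of a rank-two free abelian quotient of $G$.

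First I find a surjection $G\rightarrow\mathbb{Z}^2$. Since $G$ is finitely generated, $G^{\rm ab}$ is a finitely generated abelian group, so $G^{\rm ab}\cong\mathbb{Z}^r\oplus T$ with $T$ finite. The Hirsch length of a finitely generated abelian group is its torsion-free rank, so $r=h(G^{\rm ab})\ge 2$. Killing $T$ and all but two free coordinates gives a surjection $\pi:G\rightarrow\mathbb{Z}^2$.

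Next I check that pullback preserves index and is injective. The correspondence theorem says $H\mapsto\pi^{-1}(H)$ is an injective map from subgroups of $\mathbb{Z}^2$ to subgroups of $G$ containing $\ker\pi$, with $[G:\pi^{-1}(H)]=[\mathbb{Z}^2:H]$. Hence $a_n(G)\ge a_n(\mathbb{Z}^2)$ for every $n\ge1$.

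Finally I bound $a_n(\mathbb{Z}^2)$ from below. Via Hermite normal form, each subgroup of index $n$ in $\mathbb{Z}^2$ is the row lattice of exactly one matrix
\[
\begin{pmatrix} a & b\\ 0 & d\end{pmatrix},\qquad ad=n,\quad 0\le b<d .
\]
Therefore $a_n(\mathbb{Z}^2)=\sum_{d\mid n}d=\sigma(n)$. Alternatively, the term $d=n$, $a=1$ alone gives $n$ distinct subgroups. Either way $a_n(G)\ge n$, so $G$ has at least linear subgroup growth with $c=1$.

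None of these steps is a genuine obstacle. The one point needing care is confirming that the paper's notion of Hirsch length agrees with torsion-free rank on $G^{\rm ab}$. This holds because $h$ is additive on extensions, equals $1$ for $\mathbb{Z}$, and equals $0$ for finite groups.
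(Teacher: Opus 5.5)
Your proposal is correct and follows essentially the paper's route: use a surjection $G\rightarrow\mathbb{Z}^2$ (as in Lemma \ref{Hirschlem}), compare $a_n(G)\ge a_n(\mathbb{Z}^2)=\sigma(n)\ge n$ to get at least linear subgroup growth, and then invoke Theorem \ref{mainthm}. The differences are cosmetic. You get $a_n(G/N)\le a_n(G)$ from the correspondence theorem, whereas the paper uses the injection ${\rm Hom}(G/N,\mathfrak{S}_n)\hookrightarrow{\rm Hom}(G,\mathfrak{S}_n)$ on transitive actions. You also derive $\sigma(n)$ from Hermite normal form, whereas the paper cites the Hermite--Eisenstein formula.
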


Useful running examples to help understand our result are given by the torus as well as the Heisenberg manifold.
First consider the torus $X=\mathbb{T}^{\ell}$ with $G=\mathbb{Z}^{\ell}$ with the restriction to $\ell\ge 2$. As a consequence of results by Hermite~\cite[p. 193]{Hermite} and Eisenstein~\cite[p. 355]{Eisenstein2} (see also the earlier~\cite[p. 330]{Eisenstein1} for $\ell=2$), the zeta function is then given by
\[
\zeta_{\mathbb{Z}^{\ell}}(s)=\zeta(s)\zeta(s-1)\cdots\zeta(s-\ell+1)\ ,
\]
in terms of the Riemann zeta function. See also~\cite[Ch. 15]{LubotzkyS} for five different proofs of this beautiful classical result.
The abscissa of convergence is $\alpha_{\mathbb{Z}^{\ell}}=\ell$, and
the corresponding pole is simple, i.e., $m_{\mathbb{Z}^{\ell}}=1$.
Then $\gamma_{\mathbb{Z}^{\ell}}=\zeta(2)\zeta(3)\cdots\zeta(\ell)$
and $\mathcal{K}_{\mathbb{Z}^{\ell}}=(\ell-1)!\zeta(2)\zeta(3)\cdots\zeta(\ell)$ which is the same as the constant denoted $\mathcal{K}_{\ell}$ in~\cite{AbdesselamS}.
In this case, our new theorem agrees with~\cite[Thm 1.1]{AbdesselamS}.
Note that the CLT also holds for $\ell=1$, but requires different proof techniques, and this is why we restrict to $\ell\ge 2$. For $\ell=1$, $x=1$ this is the classic theorem of Goncharov for the number of cycles of a uniformly random permutation, while for $\ell=1$, $x\neq 1$ this is a result by Hansen (see~\cite{AbdesselamS} and references therein) with sampling according to the Ewens measure.

A more interesting nonabelian example is given by the Heisenberg manifold.
Let ${\rm Heis}(\mathbb{R})$ be the group, under matrix multiplication, of matrices of the form
\[
\begin{pmatrix}
1 & x & z\\
0 & 1 & y\\
0 & 0 & 1
\end{pmatrix}
\]
where $x,y,z$ are arbitrary real numbers.
Let ${\rm Heis}(\mathbb{Z})$ be the same matrix group but with $x,y,z$ in $\mathbb{Z}$ instead. The Heisenberg manifold is the quotient $X={\rm Heis}(\mathbb{R})/{\rm Heis}(\mathbb{Z})$. See\cite[\S7]{Ramras} for a very efficient introduction to the basic topological properties of $X$, e.g., compactness, and the structure as a circle bundle over the two-dimensional torus.
The fundamental group is just $G={\rm Heis}(\mathbb{Z})$ which is nilpotent of step $2$. We also have $h({\rm Heis}(\mathbb{Z}))=3$ while $h({\rm Heis}(\mathbb{Z})^{\rm ab})=2$.
By a result of Smith~\cite[Thm. 3, p. 20]{Smith}, the corresponding zeta function is
\begin{equation}
\zeta_{{\rm Heis}(\mathbb{Z})}(s)=
\frac{\zeta(s)\zeta(s-1)\zeta(2s-2)\zeta(2s-3)}{\zeta(3s-3)}
\label{Heiszetaeq}
\ .
\end{equation}
Note that zeta functions $\zeta_G$ which are as explicit are hard to come by. For a thorough study through specific examples see~\cite{duSautoyW}, and also the review~\cite{Voll} for an introduction to this area.
Another useful, and more recent, relevant reference in the area is~\cite{Sulca}.
From the above explicit formula, we see that $\alpha_{{\rm Heis}(\mathbb{Z})}=2$ where the zeta function has a double pole, i.e., $\gamma_{{\rm Heis}(\mathbb{Z})}=2$. Moreover, near $s=2$, we have
\[
\zeta(s)_{{\rm Heis}(\mathbb{Z})}\sim
\frac{\zeta(2)^2}{2\zeta(3)(s-2)^2}\ ,
\]
hence 
\[
\gamma_{{\rm Heis}(\mathbb{Z})}=\frac{\zeta(2)^2}{2\zeta(3)}\ .
\]
Our main theorem applies to $G={\rm Heis}(\mathbb{Z})$ and shows that
$K_{{\rm Heis}(\mathbb{Z}),n}$ satisfies the CLT with mean and variance asymptotics given by
\[
\mathbb{E}\mathsf{K}_{{\rm Heis}(\mathbb{Z}),n} \sim 
\frac{\pi^2}{12\sqrt{\zeta(3)}}\times\sqrt{x}\times\sqrt{n\ln n} \ ,
\]
and
\[
{\rm Var}(\mathsf{K}_{{\rm Heis}(\mathbb{Z}),n})  \sim 
\frac{\pi^2}{24\sqrt{\zeta(3)}}\times\sqrt{x}\times\sqrt{n\ln n} \ .
\]

\section{Preliminaries}\label{prelimsec}

Given the multidisciplinary aspect of this article, we collect in this section basic material about the notions involved in this work. We cannot prove all statements and refer instead to the relevant literature, but we will try to provide all the definitions.

\subsection{Topology}

Our main reference for this subsection is~\cite[Ch. 1]{Hatcher}.
A path in space $X$ is a continuous map $\alpha:[0,1]\rightarrow X$. One has a notion of concatenation $\alpha\beta$ of two paths $\alpha$, $\beta$ for which $\alpha(1)=\beta(0)$. The concatenation $\alpha\beta$ is obtained by going over $\alpha$ first, and then going over $\beta$ as a second step.
We denote by $[\alpha]$ the homotopy class of the path $\alpha$ corresponding to continuous deformation with fixed endpoints. The fundamental group $G=\pi_1(X,x_0)$ is the set of classes $[\alpha]$ for paths which start and end at $x_0$, with the multiplication $[\alpha][\beta]:=[\alpha\beta]$. The reverse path $\overline{\alpha}$ is defined by $\overline{\alpha}(t):=\alpha(1-t)$ and gives the inverse operation $[\alpha]^{-1}=[\overline{\alpha}]$. The neutral element is $[x_0]$, the class of the constant path equal to $x_0$.

Finite coverings $\pi:Y\rightarrow X$ of the fixed space $X$ form a category denoted by $\mathsf{Cov}$. The objects are finite coverings $(Y,\pi)$. A morphism $f$ from $(Y_1,\pi_1)$ to $(Y_2,\pi_2)$
is a continuous map $f:Y_1\rightarrow Y_2$ for which $\pi_2\circ f=\pi_1$. The categorical definition of 
an isomorphism of coverings is therefore such a map $f$ which is a homeomorphism.
If $(Y_1,\pi_1)=(Y_2,\pi_2)$, such an isomorphism $f$ is called an automorphism of that cover. We will denote by ${\rm Aut}(Y,\pi)$ the automorphism group, under composition, of the cover $(Y,\pi)$. We stress that we allow coverings where $Y$ is disconnected. We even allow empty coverings where $Y=\varnothing$ and $\pi$ is the map with empty graph.

We now also introduce the category $\mathsf{Act}$ of finite sets with $G$-action. An object is a pair $(E,L)$ where $E$ is a finite set and $L:G\times E\rightarrow E$ is a left $G$-action on the set $E$. A morphism $f$ from $(E_1,L_1)$ to $(E_2,L_2)$ is a map $f:E_1\rightarrow E_2$ such that $f(L_1(g,a))=L_2(g,f(a))$ for all $g\in G$ and $a\in E_1$. Namely, such morphisms are $G$-equivariant maps.
One again has a notion of isomorphism and automorphism. The group of automorphisms of an object $(E,L)$ will be denoted by ${\rm Aut}(E,L)$.
Note that we may switch perspective by identifying a left action $L$ on $E$ with the associated group homomorphism $\varphi\in{\rm Hom}(G,\mathfrak{S}_E)$ defined in \S\ref{introsec}.

For a fixed basepoint $x_0$, we now introduce two functors relating these two categories: a functor $\mathscr{L}$ from the category of finite covers of $X$ to that of finite sets with left $G$-action, and a functor $\mathscr{C}$ going in the opposite direction. Consider an object $(Y,\pi)$ in the former category and define $E=\pi^{-1}(\{x_0\})$, the fiber over $x_0$. Let $[\alpha]\in G$, $a\in E$, and let $\widetilde{\overline{\alpha}}$ denote the lift starting at $a$ of the path $\overline{\alpha}$ in $X$. Namely, $\widetilde{\overline{\alpha}}$ is the path in $Y$ such that $\pi\circ\widetilde{\overline{\alpha}}=\overline{\alpha}$, and $\widetilde{\overline{\alpha}}(0)=a$. By the path lifting lemma, this lift exists and is unique. We let $b=\widetilde{\overline{\alpha}}(1)\in E$ which is independent of the choice of representative $\alpha$, by the homotopy lifting lemma. We then define $L([\alpha],a):=b$ which is a left $G$-action on the fiber $E$.
This construction is called the permutation representation in~\cite[pp. 68--70]{Hatcher}, and the monodromy action on the fiber in~\cite[Ch. 2]{Szamuely}.
Note that if one did not reverse the path, we would have obtained a right instead of left action. This is an unpleasant feature due to the above standard definition of multiplication in the fundamental group. This resonates with~\cite[Remark 2.3.1]{Szamuely} which advocates for the opposite convention used by Deligne.
The probabilist reader will recognize the very similar issue in the theory of Markov chains in finite state spaces, and the dilemma of chosing  to represent probability distributions by row or column vectors.
At the level of objects, the functor $\mathscr{L}$ send $(Y,\pi)$ to $\mathscr{L}(Y,\pi)=(E,L)$ just constructed.
Now consider a morphism $f:(Y_1,\pi_1)\rightarrow(Y_2,\pi_2)$. The restriction (and co-restriction) to the fiber $f|_{E_1}:E_1\rightarrow E_2$, where for $i=1,2$, $E_i=\pi_i^{-1}(\{x_0\})$. This map is $G$-equivariant is thus is a morphism $(E_1,L_1)\rightarrow(E_2,L_2)$ where $L_i$ is defined as above. Hence the functor sends $f$ to the morphism $\mathscr{L}[f]:=f|_{E_1}$.

In order to give the definition of the functor $\mathscr{C}$, we first recall the definition of the universal cover whose construction requires the property of semilocal simple connectedness mentioned at the beginning of \S\ref{introsec}. The universal cover $\widehat{\pi}:\widehat{X}\rightarrow X$ is a cover such that $\widehat{X}$ is simply connected and it is essentially unique. One way to view it concretely is to see it as the set of homotopy classes of $[\rho]$ of paths $\rho$ in $X$ with $\rho(0)=x_0$ but $\rho(1)$ unrestricted. The projection is simply $\widehat{\pi}([\rho]):=\rho(1)$ consisting in only remembering the final destination of the path.
For $[\alpha]\in G$ and $[\rho]\in\widehat{X}$, it is easy to see that $([\alpha],[\rho])\mapsto[\alpha][\rho]:=[\alpha\rho]$ gives a left action of $G$ on $\widehat{X}$. For fixed $[\alpha]$, the map $[\rho]\mapsto[\alpha][\rho]$ belongs to ${\rm Aut}(\widehat{X},\widehat{\pi})$. This action is transitive when restricted to fibers, i.e., if $[\rho_1]$ and $[\rho_2]$ project to the same endpoint in $X$, there exists an element $[\alpha]\in G$ such that $[\alpha][\rho_1]=[\rho_2]$.
This also is a free action, i.e., if $[\alpha]\in G$ and $[\rho]\in\widehat{X}$
are such that $[\alpha][\rho]=[\rho]$, then we must have $[\alpha]=[x_0]$.

We can now define the functor $\mathscr{C}$. 
Given a finite set $E$ with left-action $L$ by $G$, we define $Y:=(\widehat{X}\times E)/G$ the space of orbits for the diagonal action of $G$ on the Cartesian product of the universal cover with the set $E$. Namely, we consider pairs $([\rho_1],a_1), ([\rho_2],a_2)\in \widehat{X}\times E$ to be equivalent if there exists $[\alpha]\in G$, such that $[\alpha][\rho_1]=[\rho_2]$ and $L([\alpha],a_1)=a_2$. We denote by $[([\rho],a)]$ the equivalence class of a pair $([\rho],a)$.
We let $\pi:Y\rightarrow X$ be defined by $\pi([([\rho],a)]):=\widehat{\pi}([\rho])=\rho(1)$. The covering $(Y,\pi)=:\mathscr{C}(E,L)$ is the object in $\mathsf{Cov}$ given by the functor $\mathscr{C}$.
Now let $(E_1,L_1)$, $(E_2,L_2)$ be two objects in $\mathsf{Act}$ and let $(Y_1,\pi_1)$, $(Y_2,\pi_2)$ be the corresponding objects obtained by $\mathscr{C}$, as just explained. If $\phi:Y_1\rightarrow Y_2$ is a $G$-equivariant map, 
then $f:Y_1\rightarrow Y_2$ given by $[([\rho],a)]\mapsto [([\rho],\phi(a))]$
is well defined and is a morphism of coverings of $X$. We thus let, $\mathscr{C}[\phi]:=f$.

Given a covering $(Y,\pi)$, we define a map $\omega_{Y,\pi}:(\widehat{X}\times E)/G\rightarrow Y$, where $E$ is the fiber $\pi^{-1}(\{x_0\})$.
For $[\rho]\in\widehat{X}$ and $a\in E$, we let $\widetilde{\rho}$ be the lift to $Y$ of the path $\rho$ in $X$, with the initial condition $\widetilde{\rho}(0)=a$.
By definition, we let $\omega_{Y,\pi}([([\rho],a)]):=\widetilde{\rho}(1)$. It is not hard to see that this is an isomorphism of covers. It is also not hard to see that for all morphisms of covers $f:(Y_1,\pi_1)\rightarrow(Y_2,\pi_2)$ we have the commutative diagram property
\[
f\circ\omega_{Y_1,\pi_1}=\omega_{Y_2,\pi_2}\circ \mathscr{C}[\mathscr{L}[f]]\ .
\]
In other words, the $\omega$'s provide a natural isomorphism between $\mathscr{C}\circ\mathscr{L}$ and the identity functor of the category $\mathsf{Cov}$.

Given a finite set $E$ with left action $L$, we define a map $\lambda_{E,L}:E\rightarrow \pi^{-1}(\{x_0\})$, where $\pi$ is the projection map of the cover $Y=(\widehat{X}\times E)/G$ obtained as $\mathscr{C}(E,L)$. By definition, we let $\lambda(a):=[([x_0],a)]$, using the constant path equal to the basepoint $x_0$.
It is not hard to see that this is an isomorphism of finite sets with left $G$-action.
Moreover, if $\phi:(E_1,L_1)\rightarrow(E_2,L_2)$ is a $G$-equivariant map, we have the commutative diagram property
\[
\mathscr{L}[\mathscr{C}[\phi]]\circ\lambda_{E_1,L_1}=\lambda_{E_2,L_2}\circ
\phi\ .
\]
Hence, the $\lambda$'s provide a natural isomorphism between the identity functor of the category $\mathsf{Act}$ and the composition functor $\mathscr{L}\circ\mathscr{C}$.

The previous discussion can be summarize by the following theorem.

\begin{theorem}\label{cateqthm}
The natural transformations $\omega$, $\lambda$ are isomorphisms of functors, and they establish an equivalence of categories between $\mathsf{Cov}$, the category of finite coverings of $X$, and $\mathsf{Act}$, the category of finite sets with a left group action by $G$.
\end{theorem}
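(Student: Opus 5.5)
To prove Theorem \ref{cateqthm}, I will check three things in turn: that $\mathscr{L}$ and $\mathscr{C}$ are well-defined functors, that every $\omega_{Y,\pi}$ and every $\lambda_{E,L}$ is an isomorphism in its category, and that both families are natural. Since the paper already asserts the two commutative diagrams, naturality will reduce to a short unwinding of definitions. The standard fact that a pair of natural isomorphisms $\mathscr{C}\circ\mathscr{L}\cong{\rm id}_{\mathsf{Cov}}$ and ${\rm id}_{\mathsf{Act}}\cong\mathscr{L}\circ\mathscr{C}$ is by definition an equivalence of categories will then finish the argument.

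\textbf{Functoriality.} For $\mathscr{L}$, the rule $L([\alpha],a)$ is well defined by homotopy lifting. It is a left action because the lift of $\overline{\alpha\beta}=\overline{\beta}\,\overline{\alpha}$ starting at $a$ first traverses $\overline{\beta}$, so $L([\alpha][\beta],a)=L([\alpha],L([\beta],a))$. A morphism of covers carries lifts to lifts by uniqueness of path lifting, so its restriction to the fiber is equivariant. For $\mathscr{C}$, the main task is to show that $(\widehat{X}\times E)/G\to X$ is a covering. Take an evenly covered, path-connected open set $U\ni x$ for $\widehat{\pi}$. Because $G$ acts freely and transitively on the sheets over $U$, each $G$-orbit of pairs (sheet, $a$) has a unique representative on a chosen sheet. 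Hence $\pi^{-1}(U)\cong U\times E$, and the covering has $|E|$ sheets. That $\mathscr{C}[\phi]$ is well defined and continuous follows from equivariance of $\phi$ and this local description.

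\textbf{Isomorphisms.} To see that $\omega_{Y,\pi}$ is well defined, replace $([\rho],a)$ by $([\alpha\rho],L([\alpha],a))$. The lift of $\alpha\rho$ starting at $b=L([\alpha],a)$ runs along the lift of $\alpha$ from $b$, which ends at $a$ because it reverses the lift of $\overline{\alpha}$ from $a$. It then follows the lift of $\rho$ from $a$, so the endpoint is unchanged. Surjectivity of $\omega_{Y,\pi}$ will come from the path-connectedness of $X$: join $x_0$ to $\pi(y)$, lift the path backwards from $y$, and read off its starting point in $E$. Injectivity will come from the fact that two lifts with the same endpoint differ, after composing one with the reverse of the other, by a loop at $x_0$, that is, by an element of $G$. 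This yields an equivalent pair. Continuity, and continuity of the inverse, I will check locally in evenly covered neighbourhoods, where $\omega_{Y,\pi}$ is a bijection between sheets. For $\lambda_{E,L}$, bijectivity follows from the free and transitive action of $G$ on $\widehat{\pi}^{-1}(x_0)$. Equivariance is a direct computation using the definition of $L$ on $\mathscr{C}(E,L)$ via reversed lifts.

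\textbf{Naturality and main obstacle.} Naturality is direct: both sides of $f\circ\omega_{Y_1,\pi_1}=\omega_{Y_2,\pi_2}\circ\mathscr{C}[\mathscr{L}[f]]$ send $[([\rho],a)]$ to the endpoint of the lift of $\rho$ from $f(a)$, because $f\circ\widetilde{\rho}$ is that lift. The $\lambda$ diagram is immediate from the definitions. I expect the main obstacle to be the topological part, namely that $\mathscr{C}(E,L)$ is a covering and that $\omega$ is a homeomorphism. This is where the topology on $\widehat{X}$ and the local path-connectedness and semilocal simple connectedness hypotheses are needed. Following the paper's approach, I would invoke the standard treatment in \cite[pp.~68--70]{Hatcher} and \cite[Thm.~2.3.4]{Szamuely} for these local facts and carry out only the algebraic verifications explicitly.
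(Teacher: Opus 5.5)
Your proposal is correct and takes the same route as the paper. The paper's proof is just the discussion preceding the theorem: it defines $\mathscr{L}$ and $\mathscr{C}$, exhibits $\omega$ and $\lambda$, asserts that they are isomorphisms satisfying the two naturality squares, and defers the topology of $\widehat{X}$ and $(\widehat{X}\times E)/G$ to Hatcher and Szamuely. You supply the checks the paper calls ``not hard to see'' (the left-action property under the reversed-path convention, well-definedness, bijectivity and continuity of $\omega_{Y,\pi}$, bijectivity and equivariance of $\lambda_{E,L}$), and all of these verifications are sound.
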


\subsection{Combinatorics}

For a cover $(Y,\pi)$ as in the previous section, we denote by ${\rm deg}(Y,\pi):=|\pi^{-1}(\{x_0\})|$ the number of sheets or degree of the cover. In the ring of formal power series $\mathbb{C}[[x,z]]$ we define the bivariate generating series
\[
\mathcal{G}_{G}(x,z):=\sum_{[Y,\pi]}\frac{x^{c(Y,\pi)}z^{{\rm deg}(Y,\pi)}}{|{\rm Aut}(Y,\pi)|}\ ,
\]
which keeps track of the number of connected components $c(Y,\pi)$ and the number of sheets ${\rm deg}(Y,\pi)$ of a cover $(Y,\pi)$.
The summation is over all isomorphism classes $[Y,\pi]$ of covers $(Y,\pi)$. Clearly, they can be collected in a countable set. The need for normalizing by the symmetry factor $|{\rm Aut}(Y,\pi)|$ is well known in physics and the combinatorics of Feynman diagrams (see~\cite{Dijkgraaf,EskinO}), in relation to Joyal's theory of combinatorial species (see~\cite{AbdesselamSLC} or the upcoming~\cite{AbdesselamA}).
A consequence of the equivalence of categories in Theorem \ref{cateqthm}
is that we can rewrite the same generating function as
\begin{equation}
\mathcal{G}_{G}(x,z)=\sum_{[E,L]}\frac{x^{c(E,L)}z^{|E|}}{|{\rm Aut}(E,L)|}\ ,
\label{leftactseries}
\end{equation}
where we sum over isomorphism classes $[E,L]$ of pairs $(E,L)$ made of a finite set $E$ and a left $G$-action $L$ on $E$.
This notion, in addition to a category, gives rise to a combinatorial species, i.e., a functor $\mathscr{F}$ from the category of finite sets with bijections into itself (see, e.g.,~\cite{AbdesselamSLC}). To an object $E$, the functor associates the set $\mathscr{F}(E)$ of all left actions $L$ by $G$ on $E$. For a morphism, i.e., bijection $\sigma:E_1\rightarrow E_2$, the functor associates the relabeling bijection $\mathscr{F}[\sigma]:\mathscr{F}(E_1)\rightarrow\mathscr{F}(E_2)$ which sends $L_1$ on $E_1$ to the action $L_2(g,a):=\sigma(L_1(g,\sigma^{-1}(a)))$ on $E_2$.
The series (\ref{leftactseries}) is an example of exponential generating series for the combinatorial species $\mathscr{F}$ of left $G$-actions.
We can also write
\[
\mathcal{G}_{G}(x,z)=\sum_{n=0}^{\infty}
\ \sum_{L\in\mathscr{F}([n])}
\frac{x^{c([n],L)}z^n}{n!}\ ,
\]
as a consequence of the orbit-stabilizer theorem, or by~\cite[Theorem 8]{AbdesselamSLC} for the natural transformation from $\mathscr{F}$ to the trivial combinatorial species.
Via the identification of left actions $L\in\mathscr{F}(E)$ with group homomorphisms $\varphi\in{\rm Hom}(G,\mathfrak{S}_E)$ mentioned in \S\ref{introsec}, we can rephrase the previous equation as
\[
\mathcal{G}_{G}(x,z)=\sum_{n=0}^{\infty}
\ \sum_{\varphi\in{\rm Hom}(G,\mathfrak{S}_n)}
\frac{x^{c(\varphi)}z^n}{n!}
=\sum_{n=0}^{\infty}\mathcal{H}_{G,n}(x)z^n\ .
\]
By a very general formula by Bantay~\cite[Eq. (1)]{Bantay}, based on the so-called exponential formula, we have
\[
\mathcal{G}_{G}(x,z)=
\exp\left(\sum_{H\leq G}\frac{x\ z^{[G:H]}}{[G:H]}\right)\ .
\]
This connects the topological enumeration of covers to the subgroup growth function, since we can also write
\begin{equation}
\mathcal{G}_{G}(x,z)=
\exp\left(x\sum_{n=1}^{\infty}\frac{a_n(G)z^n}{n}\right)\ .
\label{Bantayeq}
\end{equation}
We refer to~\cite{AbdesselamA} for more details, and in particular the derivation of the Bryan-Fulman formula, which is the special case $G=\mathbb{Z}^{\ell}$ (see~\cite{BryanF} and~\cite{AbdesselamBDV}).

\subsection{Group theory}

Our main reference for the group theory involved will be~\cite[\S6.1]{DummitF} for the elementary notions, as well as~\cite{Segal} for the more advanced treatment of polycyclic groups, and~\cite{LubotzkyS} for that of subgroup growth.
For a general group $G$, we define the commutator $[x,y]$ of two elements $x,y\in G$ by $[x,y]:=x^{-1}y^{-1}xy$. For arbitrary subsets $A,B\subseteq G$, we denote by $[A,B]$ the subgroup of $G$ generated by the subset $\{[x,y]\ |\ (x,y)\in A\times B\}$. It is easy to check that $H\trianglelefteq G$, i.e., $H$ is a normal subgroup of $G$ if and only if $[G,H]\subseteq H$. Moreover, in that case, $G/H$ is an abelian group if and only if $[G,G]\subseteq H$.
By these two remarks and an easy induction, the lower central series (of subgroups) defined by $G_0=G$, and $G_{i+1}:=[G,G_i]$ for all $i\ge 0$, is such that
\[
G=G_0\trianglerighteq G_1\trianglerighteq G_2\trianglerighteq G_3\trianglerighteq\cdots\ ,
\]
and $G_i/G_{i+1}$ is abelian for all $i\ge 0$. For $i=0$, this is the abelianization $G^{\rm ab}:=G/[G,G]$. If the series reaches the trivial group $\{1_{G}\}$ in finitely many steps, then $G$ is called a nilpotent group. If $n$ is the smallest $i$ for which $G_i=\{1_G\}$, then we say that $G$ is $n$-step nilpotent. Nilpotent of step zero means the group is trivial. Nilpotent of step one means the group is abelian. Finally, an interesting example of nilpotent group of step two is $G={\rm Heis}(\mathbb{Z})$.
Indeed, consider the matrices
\[
X=\begin{pmatrix}
1 & 1 & 0\\
0 & 1 & 0\\
0 & 0 & 1
\end{pmatrix}
\ \ ,\ \ 
Y=\begin{pmatrix}
1 & 0 & 0\\
0 & 1 & 1\\
0 & 0 & 1
\end{pmatrix}
\ \ ,\ \ 
Z=\begin{pmatrix}
1 & 0 & 1\\
0 & 1 & 0\\
0 & 0 & 1
\end{pmatrix}\ \ ,
\]
which are easily seen to generate the Heisenberg group. A quick computation shows
\[
[X,Y]=Z,\ \ [X,Z]=I\ \ ,[Y,Z]=I\ .
\]
Hence $G_0=G$, $G_1=[G,G]=\langle Z\rangle$ (the infinite cyclic group generated by $Z$), and finally $G_2=\{I\}$, which explicitly verifies the property of $G$ being 2-step nilpotent. We also have $G^{\rm ab}\simeq\mathbb{Z}^2$.

Now let $G$ be a group, and assume that there is a finite sequence of subgroups 
\[
G=H_0\trianglerighteq H_1\trianglerighteq H_2\trianglerighteq\cdots\trianglerighteq
H_n=\{1_G\}\ ,
\]
which terminates with the trivial group and such that,
$H_i/H_{i+1}$ is a cyclic group
for all $i$, $0\le i\le n-1$. Then, $G$ is called a polycyclic group. The number $h(G)$ of values of $i$ for which $H_i/H_{i+1}$ is infinite, i.e., $H_i/H_{i+1}\simeq \mathbb{Z}$ is an invariant of the group $G$. Namely, it is independent of the choice of sequence $(H_i)$. It is called the Hirsch length of $G$.
If $G$ is a finitely generated abelian group, then by the fundamental structure theorem for such groups, $G\simeq \mathbb{Z}^r\oplus {\rm Torsion}$,
where ${\rm Torsion}$ is a finite direct sum of finite cyclic groups. It is easy to see that, in this abelian situation, the Hirsch length $h(G)$ coincides with the torsion-free rank $r$.
One thus easily checks, for the Heisenberg group that $h({\rm Heis}(\mathbb{Z}))=3$, and $h({\rm Heis}(\mathbb{Z}^{\rm ab})=2$, as mentioned in \S\ref{introsec}.
The following lemma is easy to prove, and gives convenient ways to check the applicability of Corollary \ref{maincor}, and thus our main theorem \ref{mainthm}.

\begin{lemma}\label{Hirschlem}
For a polycyclic group $G$, the following are equivalent:
\begin{enumerate}
\item
$h(G^{\rm ab})\ge 2$,
\item
$\exists\ell\ge 2$, $\exists N\trianglelefteq G$ such that $G/N\simeq\mathbb{Z}^{\ell}$,
\item
$\exists N\trianglelefteq G$ such that $G/N\simeq\mathbb{Z}^{2}$,
\item
there exists a surjective group homomorphism $G\rightarrow \mathbb{Z}^{2}$.
\end{enumerate}
\end{lemma}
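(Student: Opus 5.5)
We will prove the cycle of implications (1)$\Rightarrow$(2)$\Rightarrow$(3)$\Rightarrow$(4)$\Rightarrow$(1). The only input needed beyond elementary group theory is that quotients of polycyclic groups are polycyclic and finitely generated, and that for a finitely generated abelian group the Hirsch length equals the torsion-free rank, as recalled above.

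For (1)$\Rightarrow$(2), note first that $G^{\rm ab}$ is a quotient of the polycyclic group $G$, hence finitely generated abelian. The structure theorem then gives $G^{\rm ab}\simeq\mathbb{Z}^{r}\oplus T$ with $T$ finite and $r=h(G^{\rm ab})\ge 2$. Let $p:G^{\rm ab}\rightarrow\mathbb{Z}^{r}$ be the projection killing $T$, and let $q:G\rightarrow G^{\rm ab}$ be the canonical map. Set $N:=\ker(p\circ q)$. This is normal, and since $p\circ q$ is surjective we get $G/N\simeq\mathbb{Z}^{r}$, so (2) holds with $\ell=r$.

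For (2)$\Rightarrow$(3), compose the isomorphism $G/N\simeq\mathbb{Z}^{\ell}$ with the projection $\mathbb{Z}^{\ell}\rightarrow\mathbb{Z}^{2}$ onto the first two coordinates. This gives a surjection $G\rightarrow\mathbb{Z}^2$, and its kernel $N'$ satisfies $G/N'\simeq\mathbb{Z}^2$. The step (3)$\Rightarrow$(4) is immediate by taking the quotient map $G\rightarrow G/N\simeq\mathbb{Z}^2$.

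For (4)$\Rightarrow$(1), a surjection $\psi:G\rightarrow\mathbb{Z}^2$ has abelian target, so it kills $[G,G]$ and factors through a surjection $\bar\psi:G^{\rm ab}\rightarrow\mathbb{Z}^2$. Write $G^{\rm ab}\simeq\mathbb{Z}^r\oplus T$. Since $\mathbb{Z}^2$ is torsion-free, $\bar\psi(T)=0$, so $\bar\psi$ induces a surjection $\mathbb{Z}^r\rightarrow\mathbb{Z}^2$. Tensoring with $\mathbb{Q}$ gives a surjective linear map $\mathbb{Q}^r\rightarrow\mathbb{Q}^2$, which forces $r\ge 2$, that is, $h(G^{\rm ab})\ge 2$.

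The argument has no real obstacle. The one point needing care is this rank comparison in the last step. An alternative to tensoring with $\mathbb{Q}$ is additivity of Hirsch length over extensions, $h(A)=h(\ker\bar\psi)+h(\mathbb{Z}^2)\ge 2$, but the rational-rank argument avoids having to invoke that.
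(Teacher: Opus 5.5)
Your proof is correct. The paper gives no proof of this lemma (it only calls it easy to prove), and your cycle (1)$\Rightarrow$(2)$\Rightarrow$(3)$\Rightarrow$(4)$\Rightarrow$(1) is the natural argument, correctly using the structure theorem for the finitely generated abelian group $G^{\rm ab}$, the paper's identification of Hirsch length with torsion-free rank, and a valid rank comparison over $\mathbb{Q}$ in the last step.
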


Now let $G$ be a group, and for $n\ge 1$,
let $\mathscr{T}_n(G)$ be the set of transitive left $G$-actions on $[n]$, i.e., the set of $L\in\mathscr{F}([n])$ such that $c([n],L)=1$.
Let $\mathscr{S}_n(G)$ be the set of subgroups of $G$ of index exactly $n$.
Consider the map $\Phi_n:\mathscr{T}_n(G)\rightarrow\mathscr{S}_n(G)$ which sends an action $L$ to $\Phi(L):=\{g\in G\ |\ L(g,1)=1\}$, i.e., the stabilizer of the preferred element $1\in[n]$. The map is surjective, because one can take the canonical action of $G$ on $G/H$ by left multiplication, where $H\in\mathscr{S}_n(G)$, and then one can transport this action to $[n]$ via a bijection which sends the left coset $H$ to $1$.
The map $(\sigma,L)\mapsto \mathscr{F}[\sigma](L)$ gives a left action of $\mathfrak{S}_n$ on $\mathscr{T}_n(G)$. It is not hard to see that
\[
\Phi(\mathscr{F}[\sigma](L))=\Phi(L)
\]
for every transitive left action $L$ on $[n]$, and for 
all bijection $\sigma:[n]\rightarrow[n]$ which satisfies $\sigma(1)=1$.
Moreover, one can show that two transitive left $G$-actions $L_1$, $L_2$ on $[n]$ will satisfy $\Phi(L_1)=\Phi(L_2)$ if and only if $L_2=\mathscr{F}[\sigma](L_1)$ for some permutation $\sigma$ which fixes the element $1$. Therefore, the map $\Phi$ is exactly $(n-1)!$-to-one, and
\[
a_n(G)=\frac{\mathscr{T}_n(G)}{(n-1)!}\ .
\]
Finiteness of $a_n(G)$ immediately follows from this, if the group $G$ is finitely generated.
For a normal subgroup $N$ of $G$, precomposition by the surjective projection $G\rightarrow G/N$ is an injective map from ${\rm Hom}(G/N,\mathfrak{S}_n)$ to ${\rm Hom}(G,\mathfrak{S}_n)$. This implies $|\mathscr{T}_n(G/N)|\le |\mathscr{T}_n(G)|$ and therefore $a_n(G/N)\le a_n(G)$, for all $n\ge 1$.

The mentioned results by Hermite and Eisenstein show that
\[
a_n(\mathbb{Z}^{\ell})=\sum_{\delta_1,\ldots,\delta_{\ell}\ge 1}
\bbone\{\delta_1\cdots\delta_{\ell}=n\}
\ \delta_{1}^{\ell-1}\delta_{2}^{\ell-2}\cdots\delta_{\ell-1}^{1}\delta_{\ell}^{0}\ ,
\]
where $\bbone\{\cdots\}$ denotes the indicator function of the enclosed condition.
In particular, $a_n(\mathbb{Z}^{2})$ is given by the sum of divisor function $\sigma(n)\ge n$. We have thus established the following lemma.

\begin{lemma} Suppose the polycyclic group $G$ satisfies any of the conditions stated in Lemma \ref{Hirschlem}, then $a_n(G)\ge n$ for all $n$, and as a consequence, the group $G$ is of at least linear subgroup growth.
\end{lemma}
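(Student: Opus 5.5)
By Lemma \ref{Hirschlem}, whichever of its equivalent conditions $G$ satisfies, condition (3) then holds as well: there is a normal subgroup $N\trianglelefteq G$ with $G/N\simeq\mathbb{Z}^2$. The plan is to compare $G$ with this quotient, using the monotonicity $a_n(G/N)\le a_n(G)$ established just above, and then to get a lower bound for $a_n(\mathbb{Z}^2)$ from the Hermite--Eisenstein formula.

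\textbf{Step 1 (monotonicity).} Fix an isomorphism $G/N\simeq\mathbb{Z}^2$. For each $n\ge 1$ we have $a_n(G/N)\le a_n(G)$: precomposition with the surjection $G\to G/N$ injects transitive actions of $G/N$ on $[n]$ into transitive actions of $G$ on $[n]$, and we then divide by $(n-1)!$. Since isomorphic groups have the same subgroup counts, this gives $a_n(\mathbb{Z}^2)\le a_n(G)$.

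\textbf{Step 2 (counting for $\mathbb{Z}^2$).} Take $\ell=2$ in the displayed Hermite--Eisenstein formula. This gives
\[
a_n(\mathbb{Z}^2)=\sum_{\delta_1\delta_2=n}\delta_1=\sigma(n),
\]
the sum-of-divisors function. The divisor $\delta_1=n$ alone contributes $n$, so $\sigma(n)\ge n$. One could also check this directly: for each $d\mid n$ the subgroups of index $n$ correspond to Hermite normal forms $\begin{pmatrix} d & j\\ 0 & n/d\end{pmatrix}$ with $0\le j<d$, which again yields $\sigma(n)$.

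\textbf{Step 3 (conclusion).} Combining the two steps gives $a_n(G)\ge n$ for all $n\ge1$. This is the definition of at least linear subgroup growth with $c=1$.

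No step is a real obstacle. The only point needing a little care is the equivalence of the conditions in Lemma \ref{Hirschlem}, which we take as given from that lemma. In particular the passage from $h(G^{\rm ab})\ge 2$ to a surjection onto $\mathbb{Z}^2$ goes through the structure theorem for the finitely generated abelian group $G^{\rm ab}$, projecting onto two of the $\mathbb{Z}$ factors.
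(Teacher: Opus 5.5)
Your proposal is correct and follows the paper's argument essentially verbatim: use condition (3) of Lemma \ref{Hirschlem} to get $G/N\simeq\mathbb{Z}^2$, apply the monotonicity $a_n(G/N)\le a_n(G)$ obtained by precomposing transitive actions with $G\to G/N$, and finish with $a_n(\mathbb{Z}^2)=\sigma(n)\ge n$ from the Hermite--Eisenstein formula with $\ell=2$. Your Hermite normal form cross-check is an optional extra and is not needed; it is consistent once the generators are taken to be the columns, which is the convention under which $0\le j<d$ is the right reduction.
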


\subsection{Tauberian theory}\label{Tauberiansec}

We now assume that $G$ is a $\mathscr{T}$-group with at least linear subgroup growth, so that Theorem \ref{dSGthm} applies with $\alpha_G\ge 2$.
For any real number $\beta$, define the function $(0,\infty)\rightarrow \mathbb{R}$
\[
W_{G,\beta}(u)=\sum_{\delta=1}^{\infty}\delta^{\beta} a_{\delta}(G)
\ e^{-\delta u}\ .
\]
By the term-by-term differentiation theorem, it is easy to see that $W_{G,\beta}'(u)=-W_{G,\beta+1}(u)$. These functions are thus $C^{\infty}$ on $(0,\infty)$. They take positive values and they go to $0$ when $u\rightarrow \infty$.
A key ingredient for the present work is the following leading asymptotics result for these function near zero.

\begin{proposition}\label{Wasymprop}
If $\beta>-\alpha_G$, we have, as $u\rightarrow 0^{+}$, the asymptotic equivalence
\[
W_{G,\beta}(u)\sim
\frac{\Gamma(\alpha_G+\beta)\gamma_G}{(m_G-1)!}\times u^{-(\alpha_G+\beta)}\times (-\ln u)^{m_G-1}\ .
\]
\end{proposition}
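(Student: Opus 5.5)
We would deduce Proposition \ref{Wasymprop} from the asymptotics of the summatory function $A(N):=\sum_{\delta\le N}a_\delta(G)$, followed by an Abelian (Laplace transform) argument. The first step uses Theorem \ref{dSGthm} together with Delange's generalization of the Wiener--Ikehara theorem (as in~\cite{PierceTBZ}). The Dirichlet series $\zeta_G(s)=\sum a_n n^{-s}$ has nonnegative coefficients and converges for ${\rm Re}(s)>\alpha_G$. By Theorem \ref{dSGthm}, it continues meromorphically past the line ${\rm Re}(s)=\alpha_G$, with its only singularity on that line a pole of order $m_G$ at $\alpha_G$. Hence
\[
\zeta_G(s)-\sum_{j=1}^{m_G}\frac{c_j}{(s-\alpha_G)^{j}},\qquad c_{m_G}=\gamma_G,
\]
extends continuously to the closed half-plane ${\rm Re}(s)\ge\alpha_G$. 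This is exactly Delange's hypothesis, and it yields
\[
A(N)\sim\frac{\gamma_G}{\alpha_G\,(m_G-1)!}\,N^{\alpha_G}(\ln N)^{m_G-1},\qquad N\to\infty .
\]

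The second step passes from $A(N)$ to the weighted sums $W_{G,\beta}(u)$. We write $W_{G,\beta}$ as a Stieltjes integral and integrate by parts:
\[
W_{G,\beta}(u)=\int_{1^-}^{\infty}t^{\beta}e^{-tu}\,{\rm d}A(t)
=\int_{1}^{\infty}A(t)\left(u-\frac{\beta}{t}\right)t^{\beta}e^{-tu}\,{\rm d}t .
\]
The boundary terms vanish because $A$ grows polynomially. We then substitute $t=v/u$. With the notation $A(t)=t^{\alpha_G}(\ln t)^{m_G-1}\,\ell(t)$, where $\ell(t)\to\gamma_G/(\alpha_G(m_G-1)!)$, the integrand becomes
\[
u^{-(\alpha_G+\beta)}\,v^{\alpha_G+\beta-1}(v-\beta)e^{-v}\,\bigl(\ln v-\ln u\bigr)^{m_G-1}\ell(v/u).
\]
Dividing by $(-\ln u)^{m_G-1}$, the ratio $\bigl(\ln v-\ln u\bigr)^{m_G-1}/(-\ln u)^{m_G-1}$ tends to $1$ pointwise as $u\to0^+$. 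The limit integral is
\[
\int_0^\infty v^{\alpha_G+\beta-1}(v-\beta)e^{-v}\,{\rm d}v=\Gamma(\alpha_G+\beta+1)-\beta\Gamma(\alpha_G+\beta)=\alpha_G\Gamma(\alpha_G+\beta).
\]
Multiplying by the constant $\gamma_G/(\alpha_G(m_G-1)!)$ gives exactly the claimed formula. Alternatively, for $\beta\ge0$ one could do only $\beta=0$ this way and get the general case from $W'_{G,\beta}=-W_{G,\beta+1}$. However, differentiating asymptotics needs monotonicity arguments, so the direct computation is cleaner.

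The main obstacle is justifying dominated convergence in the second step. The difficulty is near $v\to0$, where the region $t\in[1,T_0]$ is not in the asymptotic regime and where $\ln(v/u)$ can be small or large. The plan is to split the $t$-integral at a fixed large $T_0$. On $[1,T_0]$, the contribution is $O(1)$ times $\max(1,u)$, which is negligible against $u^{-(\alpha_G+\beta)}$ because $\alpha_G+\beta>0$. This is precisely where the hypothesis $\beta>-\alpha_G$ enters, and it also guarantees integrability of $v^{\alpha_G+\beta-1}$ at $0$. On $[T_0,\infty)$, we use the bound $|\ell|\le C$ together with
\[
|\ln v-\ln u|^{m_G-1}\le 2^{m_G}\bigl(|\ln v|^{m_G-1}+(-\ln u)^{m_G-1}\bigr)
\]
for $u<1$. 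This produces an integrable majorant $C'v^{\alpha_G+\beta-1}(v+|\beta|)e^{-v}(1+|\ln v|^{m_G-1})$, uniform in small $u$, and the dominated convergence theorem then finishes the proof. A secondary point is checking the exact hypotheses of Delange's theorem, namely the continuity of the remainder up to the boundary line. This follows from meromorphy on ${\rm Re}(s)>\alpha_G-\delta$ with no other poles on the line.
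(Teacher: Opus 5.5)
Your proof is correct, but it takes a different route from the paper.

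\textbf{The paper's route.} It applies the Delange--Wiener--Ikehara theorem (Theorem \ref{Tauberianthm}) directly, and separately for each $\beta$, to the nondecreasing function $a(t)=W_{G,\beta}(e^{-t})$. The substitution $w=\delta e^{-t}$ gives its Laplace transform as $\Gamma(s)\zeta_G(s-\beta)-\Psi(s)$, where $\Psi$ is an entire incomplete-gamma tail. The order-$m_G$ pole of $\zeta_G$ at $\alpha_G$ then becomes a pole at $\alpha_G+\beta$ with leading coefficient $\Gamma(\alpha_G+\beta)\gamma_G$. The Tauberian conclusion is exactly the claimed asymptotics.

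\textbf{Your route.} You use the Tauberian theorem only once, to obtain
\[
A(N)\sim \frac{\gamma_G}{\alpha_G(m_G-1)!}\,N^{\alpha_G}(\ln N)^{m_G-1}.
\]
This is Proposition \ref{sumasymprop} with $\beta=0$, already in du Sautoy--Grunewald, so you could simply cite it. You then get every $W_{G,\beta}$ by an Abelian argument: partial summation, the scaling $t=v/u$, and dominated convergence. The constant comes from
\[
\int_0^\infty v^{\alpha_G+\beta-1}(v-\beta)e^{-v}\,{\rm d}v=\alpha_G\Gamma(\alpha_G+\beta).
\]
I checked your integration by parts, the change of variables, and the domination: the split at $T_0$ plus the bound on $|\ln v-\ln u|^{m_G-1}$. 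One small point: when you divide that bound by $(-\ln u)^{m_G-1}$, you should restrict to $u\le e^{-1}$ rather than $u<1$.

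\textbf{Comparison.} Your approach is more economical. It shows that the $W_{G,\beta}$ asymptotics are a soft, Karamata-type consequence of the regular variation of $A(N)$. No analytic information on $\zeta_G$ is needed beyond what produces $A(N)$, and one Tauberian input covers all $\beta>-\alpha_G$. The same partial summation also yields Proposition \ref{sumasymprop} for every $\beta$. The paper's approach avoids the hand-made dominated convergence with logarithmic factors and the separate treatment of the non-asymptotic range. The price is invoking the heavy Tauberian machinery once per $\beta$, in a direction (from partial sums to Laplace-type transforms) where a Tauberian theorem is not actually needed.
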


For $G=\mathbb{Z}^{\ell}$ these functions $W_{\mathbb{Z}^{\ell},\beta}$ were studied in~\cite{AbdesselamS} and~\cite{Abdesselam2025}, where they were denoted $Z_{\ell+\beta}^{[\ell]}(u)$ if $\beta\in\mathbb{Z}$. See~\cite[Eqs. (12)-(15)]{AbdesselamS} for the leading asymptotics, and~\cite[Prop. 2.1]{Abdesselam2025} for the complete asymptotics to order $O(u^{\infty})$ using the Mellin approach~\cite{FlajoletGD,Zagier} (see also~\cite{Tenenbaum}).
This requires the meromorphic continuation of $\zeta_G$ to the entire complex plane and detailed knowledge of the poles. As the example of the Heisenberg group shows, these poles may be nontrivial Riemann zeros in disguise, due to the denominator of (\ref{Heiszetaeq}).
For general $\mathscr{T}$-groups $G$, we need a less quantitative tool given by Delange's generalization of the Wiener-Ikehara Tauberian theorem which requires no information whatsoever, as to how the zeta function behaves to the left of the vertical line ${\rm Re}(s)=\alpha_G$. The same Tauberian theorem will also provide us with a proof of the following result (already in~\cite{duSautoyG}) needed for our major arc estimates.

\begin{proposition}\label{sumasymprop}
If $\beta>-\alpha_G$, we have, as the integer $N$ goes to $\infty$, the asymptotic equivalence
\[
\sum_{n=1}^{N}\delta^{\beta} a_{\delta}(G)
\sim
\frac{\gamma_G}{(m_G-1)!(\alpha_G+\beta)}\times N^{\alpha_G+\beta}\times (\ln N)^{m_G-1}\ .
\]
\end{proposition}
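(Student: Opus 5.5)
The plan is to apply Delange's generalization of the Wiener--Ikehara Tauberian theorem to the Dirichlet series with nonnegative coefficients $b_\delta:=\delta^{\beta}a_\delta(G)$, namely
\[
D(s):=\sum_{\delta=1}^{\infty}\frac{\delta^{\beta}a_\delta(G)}{\delta^{s}}=\zeta_G(s-\beta)\ .
\]
By Theorem \ref{dSGthm}, this series converges absolutely for ${\rm Re}(s)>\alpha_G+\beta$. Write $a:=\alpha_G+\beta$, which is positive precisely because of the hypothesis $\beta>-\alpha_G$. Then $D$ extends meromorphically to ${\rm Re}(s)>a-\delta$, with a pole of order $m_G$ at $s=a$ and no other singularity on the line ${\rm Re}(s)=a$. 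Delange's theorem, in the form I would quote from~\cite{PierceTBZ}, requires that $D(s)-\frac{g(s)}{(s-a)^{m}}$ extend continuously to the closed half-plane ${\rm Re}(s)\ge a$, where $g$ is holomorphic near that half-plane with $g(a)\neq 0$. Its conclusion is
\[
\sum_{\delta\le N}b_\delta\sim\frac{g(a)}{a\,\Gamma(m)}N^{a}(\ln N)^{m-1}\ .
\]

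First I will check this hypothesis. Take $m=m_G$ and define $g(s):=(s-a)^{m_G}\zeta_G(s-\beta)$ on a neighborhood of the closed half-plane. The only concern is the other poles of $\zeta_G$ in the strip $\alpha_G-\delta<{\rm Re}<\alpha_G$. To avoid them, I will instead take $g$ to be the polynomial part of the principal part of $D$ at $a$, multiplied by $(s-a)^{m_G}$. Concretely, set $g(s)=\sum_{j=0}^{m_G-1}c_{-m_G+j}(s-a)^{j}$, where the $c$'s are the Laurent coefficients. This $g$ is entire, and $D(s)-g(s)(s-a)^{-m_G}$ is holomorphic near $a$. Since $D$ has no other singularities on the line ${\rm Re}(s)=a$ and is holomorphic to the right of it, the difference extends continuously, indeed holomorphically, to an open neighborhood of ${\rm Re}(s)\ge a$. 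Its leading coefficient is $g(a)=\gamma_G>0$, by the definition of $\gamma_G$ after the shift $s\mapsto s-\beta$.

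Plugging in $\Gamma(m_G)=(m_G-1)!$ then gives exactly
\[
\sum_{\delta\le N}\delta^{\beta}a_\delta(G)\sim\frac{\gamma_G}{(m_G-1)!\,(\alpha_G+\beta)}N^{\alpha_G+\beta}(\ln N)^{m_G-1}\ .
\]
The summation index $n$ in the statement should of course read $\delta$.

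The main obstacle is stating Delange's theorem with the correct normalization and hypotheses, since various sources differ. Some versions use $\Gamma(m)$, some require $a>0$ (which holds here), and some take $m$ non-integer. I would double-check the constant on the model case $G=\mathbb{Z}^2$, $\beta=0$. There $\zeta(s)\zeta(s-1)$ has residue $\zeta(2)$ at $s=2$, and $\sum_{n\le N}\sigma(n)\sim\frac{\zeta(2)}{2}N^2$, which matches the formula.

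As an alternative to citing Delange with a general $\beta$, one could apply it only for $\beta=0$ and deduce the general case by Abel summation. That route needs $a>0$ to control the boundary term and the integral, and it reproduces the factor $\frac{\alpha_G}{\alpha_G+\beta}$ relating the two constants.
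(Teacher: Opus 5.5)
Your proposal is correct and is essentially the paper's argument. The paper applies the same Delange-type Tauberian theorem (Theorem~\ref{Tauberianthm}, stated in Laplace-transform form) with $m=m_G$, $\alpha=\alpha_G+\beta$ and $a(t)=\sum_{\delta\le e^t}\delta^{\beta}a_\delta(G)$, so that $f(s)=\zeta_G(s-\beta)/s$ and $c_0=\gamma_G/(\alpha_G+\beta)$; this is your Dirichlet-series version rewritten in the variable $t=\ln N$, and your worry about other poles of $\zeta_G$ in the strip is harmless either way, because they form a discrete set and cannot accumulate at any point of the line ${\rm Re}(s)=\alpha_G+\beta$.
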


The Tauberian theorem we will use to derive Propositions \ref{Wasymprop} and \ref{sumasymprop} is the following.

\begin{theorem}\label{Tauberianthm}
Let $b(t)$ be a measurable function $[0,\infty)\rightarrow\mathbb{R}$ which is bounded on finite intervals, and such that
\[
G(s):=\int_0^{\infty}b(t)e^{-st}{\rm d}t
\]
converges (in Lebesgue sense, i.e., absolutely) for all $s\in\mathbb{C}$ with ${{\rm Re}(s)}>0$. Suppose also that there exists an integer $m\ge 1$ and there exists $\varepsilon_0>0$ such that
\[
\lim\limits_{t\rightarrow\infty}tb(t)
\int_{0}^{\varepsilon_0}u^{m-1}e^{-tu}{\rm d}u=1\ .
\]
Let $a(t)$ be a nondecreasing function $[0,\infty)\rightarrow\mathbb{R}$. Suppose there is some $\alpha>0$ such that
\[
f(s):=\int_0^{\infty}a(t)e^{-st}{\rm d}t
\]
converges (absolutely) for all $s\in\mathbb{C}$ with ${{\rm Re}(s)}>\alpha$.

Suppose that for some constant $c_0>0$, the function 
\[
F(s):=f(s)-c_0 G(s-\alpha)
\]
can be rewritten as
\[
F(s)=\frac{g(s)}{(s-\alpha)^{m-1}}+h(s)\ ,
\]
where $g(s)$ is a polynomial of degree at most $m-2$ if $m\ge 2$, or identically zero if $m=1$, and where $h(s)$ is a function which is continuous on ${\rm Re}(s)\ge \alpha$
and holomorphic on the domain ${\rm Re}(s)>\alpha$.

Then, the previous hypotheses imply the asymptotics
\[
a(t)=(c_0+o(1))\ e^{\alpha t}\ b(t)\ ,
\]
as $t\rightarrow\infty$.
\end{theorem}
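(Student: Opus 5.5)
The plan is to reduce the statement to a classical Wiener--Ikehara argument, run with an error tolerance of $o(b(t))$ rather than $o(1)$.

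\textbf{Step 1: asymptotics of $b$.} The hypothesis on $b$ determines its growth. Since $t\int_0^{\varepsilon_0}u^{m-1}e^{-tu}\,{\rm d}u=(m-1)!\,t^{-(m-1)}(1+O(e^{-\varepsilon_0 t/2}))$, it gives $b(t)\sim t^{m-1}/(m-1)!$. In particular $b$ is eventually positive, and $b(t+y)/b(t)\to1$ uniformly for $y$ in compact sets. This is the only property of $b$ the Tauberian step will need.

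\textbf{Step 2: remove the singular part.} Expand
\[
\frac{g(s)}{(s-\alpha)^{m-1}}=\sum_{k=1}^{m-1}\frac{c_k}{(s-\alpha)^k},
\]
and use that the Laplace transform of $t^{k-1}e^{\alpha t}/(k-1)!$ is $(s-\alpha)^{-k}$. Set
\[
D(t):=a(t)e^{-\alpha t}-c_0b(t)-\sum_{k=1}^{m-1}\frac{c_k t^{k-1}}{(k-1)!}.
\]
For ${\rm Re}(s)>0$, $D$ has Laplace transform $h(\alpha+s)$. This transform extends continuously to ${\rm Re}(s)\ge0$. The subtracted polynomial has degree at most $m-2$, so it is $o(b(t))$. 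The theorem therefore reduces to showing $D(t)=o(t^{m-1})$.

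\textbf{Step 3: smoothed estimate.} Take a Fej\'er-type kernel $K_\lambda(v)=\lambda K(\lambda v)$ with $K\ge0$, $\int K=1$, and $\widehat K$ continuous and supported in $[-1,1]$; for instance $K(v)=\frac{1}{2\pi}\bigl(\frac{\sin(v/2)}{v/2}\bigr)^2$. For $\sigma>0$, Fubini and Fourier inversion give
\[
\int_0^\infty D(t)e^{-\sigma t}K_\lambda(x-t)\,{\rm d}t=\frac{1}{2\pi}\int_{-\lambda}^{\lambda}h(\alpha+\sigma+iy)\,\widehat K(y/\lambda)\,e^{ixy}\,{\rm d}y .
\]
The absolute convergence hypotheses on $f$ and $G$ justify these manipulations. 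Let $\sigma\to0^+$. On the right, the compact $y$-support and the continuity of $h$ up to the line let us pass to the limit. On the left, split $D$ into its monotone part $a(t)e^{-\alpha t}$ and the remaining terms. The monotone part is handled by monotone convergence, after first checking local boundedness via the upper-bound step below. The other terms are handled by dominated convergence, since $b$ and the polynomial are dominated by $t^{m-1}$ and $K_\lambda$ decays like $v^{-2}$. Riemann--Lebesgue then shows the right side is $o(1)$ as $x\to\infty$, for each fixed $\lambda$. Hence
\[
\int a(t)e^{-\alpha t}K_\lambda(x-t)\,{\rm d}t=c_0\int b(t)K_\lambda(x-t)\,{\rm d}t+o(x^{m-1}).
\]
By Step 1 and the $v^{-2}$ tail of $K$, the right side equals $c_0 b(x)(1+o(1))$. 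The tail contributes only $O(x^{m-1}/\lambda)$-type errors relative to $b(x)$, which become small as $\lambda\to\infty$.

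\textbf{Step 4: unsmoothing (main obstacle).} This is where the monotonicity of $a$ is used, and where the presence of $b$ complicates the classical argument.

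For the upper bound, for $|v|\le \eta$ we have $a(x-\eta)\le a(x+v)$. Hence
\[
a(x-\eta)e^{-\alpha x}\int_{-\eta\lambda}^{\eta\lambda}K\;\lesssim\; e^{\alpha\eta}\,c_0b(x)\,(1+o(1)).
\]
This gives first local boundedness of $a(t)e^{-\alpha t}/b(t)$, and then
\[
\limsup \frac{a(x)e^{-\alpha x}}{b(x)}\le c_0\frac{e^{2\alpha\eta}}{\int_{|v|\le\eta\lambda}K}.
\]
For the lower bound, bound the part of the smoothing integral with $|x-t|>\eta$ using the now-established upper bound together with the tail of $K$. Inside the window, use $a(t)\le a(x+\eta)$. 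Finally let $\lambda\to\infty$, then $\eta\to0$.

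The delicate point throughout is keeping every error term of relative size $o(1)$ compared with $b(x)$, which grows polynomially rather than staying constant. The uniform ratio limit $b(x+y)/b(x)\to1$ on compact sets, combined with the polynomial bound $b(t)=O(t^{m-1})$ to control the kernel tails, is exactly what makes this work. The conclusion is $a(t)=(c_0+o(1))e^{\alpha t}b(t)$.
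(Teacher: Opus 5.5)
The paper gives no proof of this theorem. It quotes it from \cite[Thm 5.1]{PierceTBZ} as a special case of Delange's theorem, so your argument has to stand on its own. Your Steps 1 and 2 are correct: $b(t)\sim t^{m-1}/(m-1)!$, and $D$ has Laplace transform $h(\alpha+s)$. The Wiener--Ikehara architecture is also the natural one. But Steps 3 and 4 fail as written as soon as $m\ge 2$. That is exactly the case beyond Wiener--Ikehara, and the one the paper needs, since $m=m_G$ and $m_{{\rm Heis}(\mathbb{Z})}=2$.

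The problem is the kernel. The Fej\'er kernel $K_\lambda(v)=\frac{2\sin^2(\lambda v/2)}{\pi\lambda v^2}$ decays only like $v^{-2}$, while $b(t)\asymp t^{m-1}$. Hence $\int_0^\infty b(t)K_\lambda(x-t)\,{\rm d}t=+\infty$ for every $m\ge 2$ (logarithmically already for $m=2$). Likewise $\int_0^\infty a(t)e^{-\alpha t}K_\lambda(x-t)\,{\rm d}t=+\infty$ whenever the conclusion holds. So when you let $\sigma\to0^+$ at fixed $x$, the $a$-term and the $b$-term both tend to $+\infty$ and only their difference converges. Consequently:
\begin{itemize}
\item your dominated-convergence claim for the $b$-term is false;
\item monotone convergence no longer yields finiteness of the $a$-term;
\item your smoothed identity reads $\infty=\infty+o(x^{m-1})$.
\end{itemize}
Step 4 inherits the failure. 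The upper bound needs the right side to be $c_0b(x)(1+o(1))$. The lower bound needs $\int_{|w|>\eta\lambda}(1+|w|)^{m-1}K(w)\,{\rm d}w\to0$, i.e.\ a finite $(m-1)$-st moment of $K$, which the Fej\'er kernel has only when $m=1$. Your closing remark that $b(t)=O(t^{m-1})$ ``controls the kernel tails'' is precisely the point that fails.

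The missing ingredient is a kernel matched to the growth of $b$. Keep $K\ge0$, $\int K=1$, and $\widehat K$ continuous with compact support, and add the condition $\int(1+|v|)^{m-1}K(v)\,{\rm d}v<\infty$. For instance, take $K(v)=c_N\bigl(\frac{\sin(v/2N)}{v/2N}\bigr)^{2N}$ with $2N>m$; its Fourier transform is an $N$-fold convolution of triangle functions supported in $[-1,1]$. With this moment condition every limit you take becomes legitimate:
\begin{itemize}
\item the $b$-term and the polynomial term pass to $\sigma=0$ by dominated convergence;
\item monotone convergence then forces the $a$-term to be finite;
\item $\int_0^\infty b(t)K_\lambda(x-t)\,{\rm d}t=b(x)(1+o(1))$ for fixed $\lambda$, by dominated convergence using $|b(x-v)|\le C\,b(x)(1+|v|)^{m-1}$ for large $x$ and $v\le x$;
\item the polynomial part is $O(x^{m-2})$.
\end{itemize}
Your unsmoothing argument then runs as you describe.

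Alternatively, you can keep the Fej\'er kernel but take $\sigma=1/x$ instead of letting $\sigma\to0^+$. The factor $e^{-t/x}$ tames the range $t\ge 2x$, and it equals $e^{-1}(1+o(1))$ near $t=x$ on both sides, so it cancels. This works because the Riemann--Lebesgue decay of the right-hand side is uniform in $\sigma\in[0,1]$.
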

This theorem is~\cite[Thm 5.1]{PierceTBZ} to which we refer for a pedagogical proof. It is a simpler version of a more general theorem due to Delange~\cite{Delange}.
In both applications of Theorem \ref{Tauberianthm} below, we will take
$m=m_{G}$ the order of the pole of the subgroup growth zeta function $\zeta_G(s)$, and for that $m$, we will take
\[
b(t):=\frac{t^{m-1}}{(m-1)!}\ .
\]
Thus
\[
G(s)=\frac{1}{s^m}\ ,
\]
and $\varepsilon_0=1$ works for the relevant hypothesis in Theorem \ref{Tauberianthm}.

\medskip
\noindent{\bf Proof of Proposition \ref{Wasymprop}:}
In keeping with the notations of Theorem \ref{Tauberianthm}, we define
\[
a(t):=\sum_{\delta\ge 1} \delta^{\beta}
a_{\delta}(G)\ e^{-\delta e^{-t}}\ ,
\]
which converges for $t\ge 0$ because of the polynomial growth of $a_{\delta}(G)$, and is nondecreasing.
Using Tonelli's theorem, for $\sigma:={\rm Re}(s)>0$, and performing the change of variables $w=\delta e^{-t}$,
we have
\begin{eqnarray*}
\int_0^{\infty}\left|a(t)e^{-st}\right|{\rm d}t &= &
\int_0^{\infty}a(t)e^{-\sigma t}{\rm d}t\\
 & = & 
\sum_{\delta\ge 1} \delta^{\beta}
a_{\delta}(G)
\int_0^{\infty}e^{-\delta e^{-t}}e^{-\sigma t}{\rm d}t \\
 & = & 
\sum_{\delta\ge 1} \delta^{\beta}
a_{\delta}(G) \ \delta^{-\sigma}
\int_0^{\delta}e^{-w}w^{\sigma-1}{\rm d}w \\
 & \le & \Gamma(\sigma)\zeta_G(\sigma-\beta) \\
 & < & \infty\ ,
\end{eqnarray*}
if we also require $\sigma>\alpha:=\alpha_G+\beta$, which is strictly positive by hypothesis.
Hence if ${\rm Re}(s)>\alpha$, we can redo the above calculation without the $|\cdot|$ and obtain
\begin{eqnarray*}
f(s) & := & \int_0^{\infty}a(t)e^{-st}{\rm d}t \\
 & = & \sum_{\delta\ge 1} \delta^{\beta}
a_{\delta}(G) \ \delta^{-s}
\int_0^{\delta}e^{-w}w^{s-1}{\rm d}w \\
 & = & \Gamma(s)\zeta_G(s-\beta)-\Psi(s)\ ,
\end{eqnarray*}
where
\[
\Psi(s):=\sum_{\delta\ge 1} \delta^{\beta}
a_{\delta}(G) \ \delta^{-s}
\int_{\delta}^{\infty}e^{-w}w^{s-1}{\rm d}w\ .
\]
It is easy to see that $\Psi(s)$ is an entire analytic function of $s\in\mathbb{C}$.

We let $c_0:=\Gamma(\alpha_G+\beta)\gamma_G$, so the function $F(s)$ in Theorem \ref{Tauberianthm}
becomes
\[
F(s)=\Gamma(s)\zeta_G(s-\beta)-\Psi(s)
-\frac{\Gamma(\alpha_G+\beta)\gamma_G}{(s-\alpha_G-\beta)^{m_G}}\ .
\]
By Theorem \ref{dSGthm}, the function $\zeta_G(s-\beta)$ has a pole of order $m=m_G\ge 1$ at $\alpha=\alpha_G+\beta>0$. Therefore, the function
$\Gamma(s)\zeta_G(s-\beta)$ also has a pole of the same order at the same location, and one can write the relevant Laurent expansion as
\[
\Gamma(s)\zeta_G(s-\beta)=\frac{\Gamma(\alpha_G+\beta)\gamma_G}{(s-\alpha_G-\beta)^{m_G}}
+\sum_{j=1}^{m_G-1}\frac{\nu_j}{(s-\alpha_G-\beta)^j}
+\Phi(s)\ ,
\]
for some $\Phi(s)$ which is continuous for ${\rm Re}(s)\ge\alpha_G+\beta$ and holomorphic for ${\rm Re}(s)>\alpha_G+\beta$,
and for some constants $\nu_j$. The sum over $j$ is by definition empty or identically zero if $m_G=1$.
As a result,
\[
F(s)=\frac{g(s)}{(s-\alpha_G-\beta)^{m_G-1}}+h(s)
\]
with the polynomial
\[
g(s):=\sum_{j=1}^{m_G-1}\nu_j (s-\alpha_G-\beta)^{m_G-1-j}\ ,
\]
and the function 
\[
h(s):=\Phi(s)-\Psi(s)\ ,
\]
which fulfills the requirements of Theorem \ref{Tauberianthm}.
The conclusion of latter entails
\[
a(t)=(\Gamma(\alpha_G+\beta)\gamma_G+o(1))
\ e^{(\alpha_G+\beta)t}\ b(t)\ ,
\]
as $t\rightarrow\infty$.
Let $u>0$ be related to $t$ by $u:=e^{-t}$, so that $W_{G,\beta}(u)=a(t)$, and $t\rightarrow\infty$ corresponds to $u\rightarrow 0^{+}$. The wanted asymptotics immediately follow.
\qed

\medskip
\noindent{\bf Proof of Proposition \ref{sumasymprop}:}
We will be rather brief because the steps are similar to the provious proof, and also because this example of application of Theorem \ref{Tauberianthm} has been treated in detail in~\cite{PierceTBZ} for $\beta=0$, and the shift by $\beta$ is a straightforward modification.
We again use $m=m_G$, $\alpha=\alpha_G+\beta$, and the same $b(t)$ and $G(s)$.
We define
\[
a(t):=\sum_{1\le \delta\le e^t}\delta^{\beta} a_{\delta}(G)\ .
\]
We then obtain
\[
f(s)=\frac{\zeta_G(s-\beta)}{s}\ ,
\]
when ${\rm Re}(s)>\alpha_G+\beta>0$, by hypothesis.
We let
\[
c_0:=\frac{\gamma_G}{\alpha_G+\beta}\ .
\]
It is easy to see that the hypotheses of Theorem \ref{Tauberianthm} hold, from which we deduce the asymptotics
\[
a(t)=\left(\frac{\gamma_G}{\alpha_G+\beta}+o(1)\right)\ e^{(\alpha_G+\beta)t}\ b(t)\ ,
\]
as $t\rightarrow\infty$. We then switch variables to $u=e^{t}$ and restrict to integer values $u=N$, and the wanted result is established.
\qed

For our major arc estimates, we will only need the case $\beta=1$ of Proposition \ref{sumasymprop}. As for Proposition \ref{Wasymprop}, we will need the cases $\beta\in\{-1,0,1,2\}$
explicitly collected below for ease of reference:
\begin{eqnarray}
W_{G,-1}(u) & \sim & \frac{\mathcal{K}_G}{\alpha_G-1}\times u^{-\alpha_G+1}\times (-\ln u)^{m_G-1}\ , \label{Wminuseq}\\
W_{G,0}(u) & \sim & \mathcal{K}_G\times u^{-\alpha_G}\times (-\ln u)^{m_G-1}\ ,
\label{Wzeroeq} \\
W_{G,1}(u) & \sim & \alpha_G\mathcal{K}_G\times u^{-\alpha_G-1}\times (-\ln u)^{m_G-1}\ , \label{Wpluseq} \\
W_{G,2}(u) & \sim & \alpha_G(\alpha_G+1)\mathcal{K}_G\times u^{-\alpha_G-2}\times (-\ln u)^{m_G-1}\ , \label{Wpluspluseq}
\end{eqnarray}
as $u\rightarrow 0^{+}$.

The $\beta=0$ function is particularly important for this work. The function $W_{G,0}(u)$ is a smooth decreasing bijection of $(0,\infty)$ onto itself, and we will need the following lemma which gives the asymptotics of the (compositional) inverse function $W_{G,0}^{-1}(w)$, as $w\rightarrow\infty$.

\begin{lemma}\label{calclem}
As $w\rightarrow\infty$, we have
\[
W_{G,0}^{-1}(w)\sim
\mathcal{K}_{G}^{\frac{1}{\alpha_G}}
\times\alpha_G^{-\left(\frac{m_G-1}{\alpha_G}\right)}
\times w^{-\frac{1}{\alpha_G}}\times (\ln w)^{\frac{m_G-1}{\alpha_G}}\ .
\]
\end{lemma}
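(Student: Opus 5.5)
We invert the asymptotic relation (\ref{Wzeroeq}) directly. Write $\alpha=\alpha_G$, $m=m_G$, $\mathcal{K}=\mathcal{K}_G$, and set $u=W_{G,0}^{-1}(w)$, so that $w=W_{G,0}(u)$. Since $W_{G,0}$ is a decreasing bijection of $(0,\infty)$, we have $u\to 0^+$ as $w\to\infty$. By (\ref{Wzeroeq}),
\[
w=(1+o(1))\,\mathcal{K}\,u^{-\alpha}(-\ln u)^{m-1}.
\]

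The first step is to take logarithms. This gives $\ln w=\alpha(-\ln u)+(m-1)\ln(-\ln u)+\ln\mathcal{K}+o(1)$. The dominant term is $\alpha(-\ln u)$, hence
\[
-\ln u=\frac{\ln w}{\alpha}\,(1+o(1)).
\]
In particular $(-\ln u)^{m-1}\sim(\ln w/\alpha)^{m-1}$. Note that only this multiplicative asymptotic for $-\ln u$ is needed, not an additive one, because $-\ln u$ only enters through a power.

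The second step is to solve for $u^{\alpha}$. From the displayed relation we get $u^{\alpha}=(1+o(1))\,\mathcal{K}\,w^{-1}(-\ln u)^{m-1}$. Substituting the first step yields
\[
u^{\alpha}\sim\mathcal{K}\,w^{-1}\,\alpha^{-(m-1)}(\ln w)^{m-1}.
\]
Taking the $1/\alpha$-th power preserves asymptotic equivalence, and this gives exactly
\[
\mathcal{K}^{1/\alpha}\,\alpha^{-(m-1)/\alpha}\,w^{-1/\alpha}(\ln w)^{(m-1)/\alpha}.
\]

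No step is a real obstacle. The one point needing care is in the first step, justifying that the correction $(m-1)\ln(-\ln u)$ is $o(-\ln u)$ as $u\to0^+$. This holds trivially, so the lower-order terms do not affect the leading behavior. When $m=1$ the logarithmic factors are absent and the argument simplifies.
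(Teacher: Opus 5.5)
Your proposal is correct and is essentially the paper's intended argument. The paper leaves this proof to the reader as a direct application of the method in de Bruijn, \S2.4, starting from (\ref{Wzeroeq}). That method is the bootstrapping you carry out: take logarithms to get $-\ln u\sim \frac{\ln w}{\alpha_G}$, then substitute this back into $u^{\alpha_G}=(1+o(1))\,\mathcal{K}_G\,w^{-1}(-\ln u)^{m_G-1}$.
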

The proof, which starts with (\ref{Wzeroeq}),
is left to the reader, and it is a direct application of the method used in~\cite[\S2.4]{deBruijn}. This is the analogue of~\cite[Lem 2.4]{AbdesselamS}, where $G=\mathbb{Z}^{\ell}$, $\ell\ge 2$, and which had no logarithmic corrections.

\section{Setting up the saddle point analysis}

From now on we assume the topological space $X$ satisfies the conditions stated at the beginning of \S\ref{introsec}, and that its fundamental group $G=\pi_1(X,x_0)$ satisfies the hypotheses of our main result, i.e., $G$ is a $\mathscr{T}$-group with at least linear subgroup growth. Theorem \ref{mainthm} is a consequence of the following key proposition.

\begin{proposition}\label{keyprop}
Consider the previous random variables $\mathsf{K}_{G,n}$ with distribution determined by the measure $\mathbb{P}_{G,n,x}$. 
Let $(a_n)$ be a sequence of real numbers, and let $(b_n)$ be a sequence of positive real numbers such that, as $n\rightarrow\infty$,
\begin{eqnarray*}
a_n &=& 
x W_{G,-1}\left(
W_{G,0}^{-1}\left(\frac{n}{x}\right)
\right)
+o\left(
n^{\frac{\alpha_G-1}{2\alpha_G}}(\ln n)^{\frac{m_G-1}{2\alpha_G}}
\right)\ ,  \\
b_n & = & 
\frac{1}{\sqrt{\alpha_G(\alpha_G-1)}}
\times{\alpha_G}^{-\left(\frac{m_G-1}{2\alpha_G}\right)}
\times(x\mathcal{K}_G)^{\frac{1}{2\alpha_G}}
\times n^{\frac{\alpha_G-1}{2\alpha_G}}
\times(\ln n)^{\frac{m_G-1}{2\alpha_G}}
\times (1+o(1))\ .
\end{eqnarray*}
Then, for all $s\in\mathbb{R}$, we have
\[
\lim\limits_{n\rightarrow\infty}
\ln
\mathbb{E}\left[\exp\left(s\left(\frac{\mathsf{K}_{G,n}-a_n}{b_n}\right)\right)\right]
=\frac{s^2}{2}\ .
\]
\end{proposition}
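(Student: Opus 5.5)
The plan is to express the moment generating function through the polynomials $\mathcal{H}_{G,n}$ and estimate these with a saddle-point analysis of the generating function (\ref{Bantayeq}). Since $\mathbb{E}[y^{\mathsf{K}_{G,n}}]=\mathcal{H}_{G,n}(xy)/\mathcal{H}_{G,n}(x)$, setting $y_n:=e^{s/b_n}$ gives
\[
\ln\mathbb{E}\left[e^{s(\mathsf{K}_{G,n}-a_n)/b_n}\right]=\ln\mathcal{H}_{G,n}(xy_n)-\ln\mathcal{H}_{G,n}(x)-\frac{sa_n}{b_n}\ .
\]
Everything therefore reduces to a uniform asymptotic formula for $\mathcal{H}_{G,n}(\xi)$ when $\xi$ ranges over a shrinking neighborhood of $x$. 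We write $\mathcal{H}_{G,n}(\xi)$ by Cauchy's formula on the circle $z=e^{-u+i\theta}$, which gives
\[
\mathcal{H}_{G,n}(\xi)=\frac{e^{nu}}{2\pi}\int_{-\pi}^{\pi}\exp\left(\xi\sum_{\delta\ge1}\frac{a_\delta(G)}{\delta}e^{-\delta(u-i\theta)}+in\theta\right){\rm d}\theta\ .
\]
For real $\theta=0$ the exponent is $\xi W_{G,-1}(u)+nu$. We choose the saddle point $u=u_n(\xi):=W_{G,0}^{-1}(n/\xi)$, since $W_{G,-1}'=-W_{G,0}$. By Lemma \ref{calclem}, $u_n\asymp n^{-1/\alpha_G}(\ln n)^{(m_G-1)/\alpha_G}\to0$.

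Near $\theta=0$ (the major arc $|\theta|\le\theta_n$), we Taylor-expand the exponent in $\theta$. The quadratic coefficient is $-\tfrac12\xi W_{G,1}(u_n)$ and the cubic remainder is bounded by $\xi W_{G,2}(u_n)|\theta|^3$. The estimates (\ref{Wpluseq}) and (\ref{Wpluspluseq}) show that $\theta_n$ can be chosen with $W_{G,1}\theta_n^2\to\infty$ and $W_{G,2}\theta_n^3\to0$, for instance $\theta_n=u_n^{1+\alpha_G/2-\epsilon}$ with small $\epsilon$, up to logarithms. This yields
\[
\mathcal{H}_{G,n}(\xi)\sim\frac{\exp(\xi W_{G,-1}(u_n)+nu_n)}{\sqrt{2\pi\xi W_{G,1}(u_n)}}
\]
uniformly for $\xi$ near $x$.

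The minor arc $\theta_n\le|\theta|\le\pi$ is the step I expect to be the main obstacle. We need
\[
{\rm Re}\sum_\delta\frac{a_\delta(G)}{\delta}e^{-\delta u}(1-\cos\delta\theta)\ge c\,u^{-\alpha_G+1}\cdot(\text{small power})\ ,
\]
so that the integrand is negligible compared with the Gaussian contribution. Since no fine arithmetic information is available for general $G$, I would use only the positivity of $a_\delta(G)$ together with a lower bound coming from the range $\delta\le 1/u$. In the region $\theta_n\le|\theta|\le u$, the bound $1-\cos\delta\theta\ge c\delta^2\theta^2$ for $\delta\theta\le1$ combined with Proposition \ref{sumasymprop} (case $\beta=1$) gives a lower bound $\gtrsim\theta^2\sum_{\delta\le 1/u}\delta a_\delta e^{-\delta u}\asymp\theta^2W_{G,1}$. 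For $|\theta|>u$, we restrict to $\delta\le 1/|\theta|$, where Proposition \ref{sumasymprop} gives $\theta^2|\theta|^{-\alpha_G-1}(\ln)^{m_G-1}=|\theta|^{1-\alpha_G}$. This is large because $\alpha_G\ge2$, but it must beat the $\sqrt{W_{G,1}}$ prefactor loss, which is only of order $\ln n$. The hypothesis of at least linear growth, $\alpha_G\ge2$, is exactly what makes $|\theta|^{1-\alpha_G}\ge 1/\pi$ times a diverging factor coming from the $\delta\sim 1$ terms. Near $|\theta|\sim\pi$, however, the terms with $\delta=1,2$ alone only give $O(1)$, so I would instead use $a_\delta\ge c\delta$ directly: the sum $\sum_\delta e^{-\delta u}(1-\cos\delta\theta)$ is of order $1/u$ uniformly for $|\theta|\ge u$ by the geometric series, and this suffices.

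Finally, with $\xi=xy_n$ and $y_n=1+s/b_n+O(b_n^{-2})$, we expand $\Lambda(\xi):=\min_u(\xi W_{G,-1}(u)+nu)$. Its derivatives are $\Lambda'(\xi)=W_{G,-1}(u_n(\xi))$ and $\Lambda''=W_{G,0}(u_n)^2/(\xi W_{G,1}(u_n))=n^2/(\xi^3W_{G,1})$ by the envelope theorem. Using (\ref{Wminuseq})–(\ref{Wpluseq}) and Lemma \ref{calclem}, $x^2\Lambda''(x)+x\Lambda'(x)\sim b_n^2$, where the linear term $x\Lambda'$ contributes $1/(\alpha_G-1)$ and the second-order term contributes $-1/\alpha_G$ after combining with $\ln y$. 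The ratio of the prefactors $\sqrt{W_{G,1}}$ tends to $1$, and the third-order terms are $O(n^{\cdot}/b_n^3)\to0$. Subtracting $sa_n/b_n$, where the $o(\cdot)$ error in $a_n$ is $o(b_n)$, leaves $s^2/2$.
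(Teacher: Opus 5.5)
Your proof is correct. It organizes the $s$-dependence differently from the paper, while the arc estimates are essentially the same.

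\textbf{How the two routes differ.} The paper uses the single radius $e^{-t_n}$, $t_n=W_{G,0}^{-1}(n/x)$, for both $\mathcal{H}_{G,n}(x)$ and $\mathcal{H}_{G,n}(x_n)$.
\begin{itemize}
\item The prefactor then changes exactly by $x(e^{s/b_n}-1)W_{G,-1}(t_n)$. This gives $B_n s^2/2$ with $B_n\to\alpha_G$, plus an explicit exponential remainder.
\item Because the radius is off-saddle for $x_n$, the major-arc integrand carries a drift $i\Theta\lambda_n n(1-e^{s/b_n})$. Its Gaussian Fourier transform produces $R_n(s)\to-(\alpha_G-1)s^2/2$.
\end{itemize}
You instead re-optimize the radius for each $\xi$. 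The $\theta$-integral then has no linear term and contributes only the ratio of the $\sqrt{\xi W_{G,1}}$ factors, which tends to $1$. The whole $s^2/2$ comes from $\phi(\sigma)=\Lambda(xe^{\sigma})$ via the envelope theorem.

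The two routes match term by term: $x\Lambda'(x)/b_n^2$ is exactly the paper's $B_n$, and $x^2\Lambda''(x)/b_n^2\to-(\alpha_G-1)$ reproduces the limit of $2R_n(s)/s^2$.

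\textbf{What each route buys.} Yours is conceptually cleaner: the variance appears as a second log-derivative of a Legendre-type transform. It costs two things the fixed contour avoids:
\begin{itemize}
\item a saddle-point analysis at two different radii;
\item a uniform bound $\phi'''(\sigma)=O(b_n^2)$ for $|\sigma|\le|s|/b_n$. This needs (\ref{Wminuseq})--(\ref{Wpluspluseq}) at the moving points $u_n(xe^{\sigma})$, which is fine since $u_n(xe^{\sigma})/t_n\to1$ uniformly, but you should say it.
\end{itemize}

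\textbf{The arcs.} The paper takes the wider major arc $|\theta|\le t_n$ and uses dominated convergence with the Gaussian bound from Proposition \ref{sumasymprop}. You take a narrow arc where the Taylor expansion is uniform, and handle $\theta_n\le|\theta|\le u_n$ with the same lower bound. For $|\theta|\ge u_n$, your final geometric-series argument using $a_\delta(G)\ge c\delta$ is exactly the paper's minor-arc estimate. The earlier discussion of $|\theta|^{1-\alpha_G}$ is unnecessary and can be dropped.

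\textbf{Slips to fix.}
\begin{itemize}
\item Your sample choice $\theta_n=u_n^{1+\alpha_G/2-\epsilon}$ gives $W_{G,1}(u_n)\theta_n^2\approx u_n^{1-2\epsilon}\to0$, so it fails your own first condition. You need $\theta_n=u_n^{\gamma}$ with $\frac{\alpha_G+2}{3}<\gamma<\frac{\alpha_G+1}{2}$; this range is nonempty since $\alpha_G>1$.
\item The second derivative is $\Lambda''(\xi)=-W_{G,0}(u_n)^2/(\xi W_{G,1}(u_n))$, with a minus sign. $\Lambda$ is a minimum of affine functions of $\xi$, hence concave. Your stated contribution $-1/\alpha_G$ already uses the correct sign.
\item In your Cauchy integrand, $in\theta$ must have the sign opposite to $i\delta\theta$. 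Otherwise the linear term does not cancel at the saddle.
\end{itemize}
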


Note that, for the benefit of the reader already aquainted with~\cite{AbdesselamS}, we will follow the same proof strategy and will use very similar notation, e.g., $a_n$, $b_n$ for the needed sequences. There should be no confusion with the subgroup growth sequence $a_n(G)$ because the latter will always include the reference to the group $G$. The real variable $s$ is fixed, and should not lead to confusion with the complex argument of $\zeta_G$, used in \S\ref{Tauberiansec}, and which is no longer needed.
Since the group has polynomial subgroup growth, the series
\[
\sum_{\delta=1}^{\infty}\frac{a_{\delta}(G)z^{\delta}}{\delta}
\]
converges absolutely for $|z|<1$.
Therefore, the equality (\ref{Bantayeq}) holds not only in the sense of formal power series, but also for fixed $x>0$, as an equality of analytic functions on the disk $|z|<1$.
We can use Cauchy's formula to extract the coefficient of $z^n$ by
\[
\mathcal{H}_{G,n}(x)=\frac{1}{2i\pi}\oint_{C(r)}
z^{-n}\ \mathcal{G}_{G}(x,z)\ \frac{{\rm d}z}{z}\ ,
\]
where $C(r)$ denotes the circle of
radius $r\in(0,1)$ around the origin with counterclockwise orientation.
We will write this radius as $r=e^{-t}$ for a suitably optimized
parameter $t\in(0,\infty)$. 
We will also split $\mathcal{H}_{G,n}(x)$ as a product of a prefactor times an integral to be analyzed by the saddle point method:
\[
\mathcal{H}_{G,n}(x)=\mathcal{P}_{n}(x,t)\mathcal{J}_{n}(x,t)\ ,
\]
where
\[
\mathcal{P}_{n}(x,t):=e^{nt}\mathcal{G}_{G}(x,e^{-t})
=\exp\left(nt+xW_{G,-1}(t)\right)\ ,
\]
and
\[
\mathcal{J}_{n}(x,t):=\int_{-\pi}^{\pi}j_{n}(x,t,\theta)\ \frac{{\rm d}\theta}{2\pi}\ ,
\]
with
\[
j_{n}(x,t,\theta):=e^{-in\theta}\times\frac{\mathcal{G}_{G}(x,e^{-t+i\theta})}{\mathcal{G}_{G}(x,e^{-t})}\ .
\]
For $y,u>0$ and $-\pi<\theta<\pi$, we also define
\begin{equation}\label{qdefeq}
q_n(y,u,\theta):=in\theta-y\left(
\sum_{\delta\ge 1}\frac{(e^{-u+i\theta})^{\delta}}{\delta}a_{\delta}(G)
-\sum_{\delta\ge 1}\frac{(e^{-u})^{\delta}}{\delta}a_{\delta}(G)
\right)\ .
\end{equation}
Hence,
\[
\mathcal{J}_{n}(x,t):=\int_{-\pi}^{\pi}e^{-q_n(x,t,\theta)}\ \frac{{\rm d}\theta}{2\pi}\ .
\]
To establish Proposition \ref{keyprop}, we need to show the convergence to $\frac{s^2}{2}$ of the log-moment generating function
\[
\Psi_n(s):=
\ln
\mathbb{E}\left[\exp\left(s\left(\frac{\mathsf{K}_{G,n}-a_n}{b_n}\right)\right)\right]
\ .
\]
Again, the notation closely follows~\cite{AbdesselamS} and $\Psi_n(s)$ is unrelated to the function $\Psi(s)$ temporarily introduced for the purposes of proving Proposition \ref{sumasymprop}.
We define the two sequences
\[
t_n:=W_{G,0}^{-1}\left(\frac{n}{x}\right)\ ,
\]
and
\[
x_n:=x e^{\frac{s}{b_n}}\ .
\]
By Lemma \ref{calclem}, we immediately deduce the asypmtotics
\begin{equation}
t_n\sim (x\mathcal{K}_G)^{\frac{1}{\alpha_G}}\times
\alpha_G^{-\left(\frac{m_G-1}{\alpha_G}\right)}\times
n^{-\frac{1}{\alpha_G}}
\times (\ln n)^{\frac{m_G-1}{\alpha_G}}\ ,
\label{tasymeq}
\end{equation}
and in particular $t_n\rightarrow 0^{+}$, when
$n\rightarrow\infty$.

We also have
\[
\Psi_n(s)=-\frac{s a_n}{b_n}
+\ln \mathcal{P}_{n}(x_n,t_n)
-\ln \mathcal{P}_{n}(x,t_n)
+R_n(s)
\]
with
\begin{equation}
R_n(s):=\ln\left[
\frac{\mathcal{J}_{n}(x_n,t_n)}{\mathcal{J}_{n}(x,t_n)}
\right]\ .
\label{Rneq}
\end{equation}
Note that the constant sequence equal to $x$ can also be viewed as the $x_n$
sequence, in the special case when $s=0$.
The saddle point method \`a la Hayman requires choosing the contour or the radius $r=e^{-t}$, in order to minimize the maximum over the contour of the modulus of the integrand.
This gives the equation
\[
\frac{{\rm d}}{{\rm d}t}\left(nt+x W_{G,-1}(t)\right)=0\ ,
\]
which is precisely solved by our choice for $t_n$.
Thus the optimization is exact for the integral giving $\mathcal{H}_{G,n}(x)$, but only approximate for the integral giving $\mathcal{H}_{G,n}(x_n)$.

An easy calculation shows, as in~\cite[\S5]{AbdesselamS},
\begin{equation}
\Psi_n(s)=A_n s+ \frac{B_n s^2}{2}+x\left(e^{\frac{s}{b_n}}-1-\frac{s}{b_n}
-\frac{s^2}{2b_n^2}\right)W_{G,-1}(t_n) +R_{n}(s)\ ,
\label{Psineq}
\end{equation}
with
\begin{eqnarray}
A_n &:=& \frac{x W_{G,-1}(t_n)(t_n)-a_n}{b_n}\ ,\label{Aneq}\\
B_n & := &\frac{x W_{G,-1}(t_n)}{b_n^2} \ . \label{Bneq}
\end{eqnarray}
We will split the integral $\mathcal{J}_{n}(x_n,t_n)$ into two pieces:
a major arc region of integration
\[
\mathcal{R}_{\rm maj}:=\{\theta\in (-\pi,\pi)\ |\ |\theta|\le t_n\}\ ,
\]
and a minor arc region of integration
\[
\mathcal{R}_{\rm min}:=\{\theta\in (-\pi,\pi)\ |\ |\theta|> t_n\}\ .
\]
We will use separate estimates for these two regions, in order to determine asymptotics for the integral $\mathcal{J}_{n}(x_n,t_n)$.

\section{Major and minor arc estimates}

From the definition (\ref{qdefeq}), we deduce
\begin{eqnarray}
{\rm Re}\ q_n(x_n,t_n,\theta) & = & 
x_n \sum_{\delta=1}^{\infty}\frac{e^{-\delta t_n}}{\delta}\ a_{\delta}(G)\ \ (1-e^{i\delta\theta})\ , \label{reqeq}\\
 & = & 2x_n \sum_{\delta=1}^{\infty}
\frac{a_{\delta}(G)}{\delta}\ e^{-\delta t_n}\sin^{2}\left(
\frac{\delta \theta}{2}
\right)\ . \nonumber
\end{eqnarray}
We let 
\[
N_n:=\left\lfloor\frac{\pi}{t_n}\right\rfloor\ , 
\]
and see that for $\theta\in\mathcal{R}_{\rm maj}$ we have
\begin{eqnarray*}
{\rm Re}\ q_n(x_n,t_n,\theta) & \ge &
 2x_n \sum_{\delta=1}^{N_n}
\frac{a_{\delta}(G)}{\delta}\ e^{-\delta t_n}\sin^{2}\left(
\frac{\delta \theta}{2}
\right) \ ,\\
 & \ge & \frac{2 x_n \theta^2}{\pi^2 e^{\pi}}\sum_{\delta=1}^{N_n}
\delta\ a_{\delta}(G)\ ,
\end{eqnarray*}
where we use the convexity estimate $\sin u\ge \frac{2u}{\pi}$ for $0\le u\le\frac{\pi}{2}$, with $u=\left|\frac{\delta\theta}{2}\right|$.
This is because $1\le\delta\le N_n\le\frac{\pi}{t_n}$, and $|\theta|\le t_n$ by hypothesis. We also used $\delta t_n\le \pi$ to bound $e^{-\delta t_n}$ from below by $e^{-\pi}$.

Since $N_n\sim \frac{\pi}{t_n}$, we deduce from Proposition \ref{sumasymprop} with $\beta=1$,
\begin{eqnarray*}
\frac{2 x_n \theta^2}{\pi^2 e^{\pi}}\sum_{\delta=1}^{N_n}
\delta\ a_{\delta}(G)
& \sim &
 \frac{2 x \theta^2 \pi^{\alpha_G-1}\gamma_G}{e^{\pi}(m_G-1)!(\alpha_G+1)}
\times t_n^{-\alpha_G-1}\times(-\ln t_n)^{m_G-1}\ ,\\
 & \ge & \eta_{\rm maj}\ \theta^2\  t_n^{-\alpha_G-1}\ (-\ln t_n)^{m_G-1}\ ,
\end{eqnarray*}
for some constant $\eta_{\rm maj}>0$ (possibly dependent on $x$) for all $n$ large enough.

We now define
\[
\lambda_n:=\left(x_n W_{G,1}(t_n)\right)^{-\frac{1}{2}}\ ,
\]
which satisfies
\begin{equation}
\lambda_n\sim (x\alpha_G\mathcal{K}_G)^{-\frac{1}{2}}
\times t_n^{\frac{\alpha_G+1}{2}}\times(-\ln t_n)^{-\left(\frac{m_G-1}{2}\right)}\ ,
\label{lambdaasymeq}
\end{equation}
by (\ref{Wpluseq}).
We will do the change of variable $\theta=\lambda_n\Theta$ in the integral over the major arc region, i.e., write
\[
\int_{\mathcal{R}_{\rm maj}}e^{-q_n(x,t,\theta)}\ \frac{{\rm d}\theta}{2\pi}=
\frac{\lambda_n}{2\pi}\int_{\mathbb{R}}f_n(\Theta)\ {\rm d}\Theta\ ,
\]
where
$f_n\colon\mathbb{R}\rightarrow\mathbb{C}$ is defined by
\[
f_n(\Theta):=\bbone\{|\Theta|\le t_n\lambda_n^{-1}\}
\ e^{-q_{\ell,n}(x_n,t_n,\lambda_n\Theta)}\ ,
\]
with $n$ large enough so that $t_n<\pi$.
Note that since $\alpha_G>1$, by (\ref{lambdaasymeq}), we have $\lim_{n\rightarrow\infty} t_n\lambda_n^{-1}=\infty$, so the effective domain of integration goes to the entire real line.

From formula (\ref{qdefeq}), and because of the definition of $x_n$, $t_n$ and $\lambda_n$,  we have after the change of variables,
\[
q_n(x_n,t_n,\lambda_n\Theta)
= i\Theta\lambda_n n(1-e^{\frac{s}{b_n}})+\frac{\Theta^2}{2}+{\rm Err}\ ,
\]
where the error term is given by
\[
{\rm Err}=-x_n\sum_{\delta\ge 1}\frac{e^{-t_n\delta}}{\delta}
\ a_{\delta}(G)\ T(i\delta\lambda_n\Theta)\ ,
\]
with the notation
\[
T(v):=e^v-1-v-\frac{v^2}{2}\ .
\]
By Taylor's formula with integral remainder one easily sees that
we have the bound $T(v)\le \frac{|v|^3}{6}$, if $v$ is pure imaginary.
By applying this to $v=i\delta\lambda_n\Theta$ in the formula for the error term, we obtain
\[
|{\rm Err}|\le \frac{1}{6}x_n\Theta^3\lambda_n^3 W_{G,2}(t_n)\ .
\]
By (\ref{Wpluspluseq}) and (\ref{lambdaasymeq}), and since $\alpha_G>1$, we see that
\[
\lim\limits_{n\rightarrow \infty}\lambda_n^3 W_{G,2}(t_n)=0\ .
\]
Hence pointwise in $\Theta\in\mathbb{R}$, the error term goes to zero.

From (\ref{tasymeq}) we see that $(-\ln t_n)\sim\frac{\ln n}{\alpha_G}$. So by combining (\ref{tasymeq}) and (\ref{lambdaasymeq}), we obtain the more explicit asymptotics in term of $n$ given by
\begin{equation}
\lambda_n\sim
\frac{1}{\sqrt{\alpha_G}}
\times{\alpha_G}^{-\left(\frac{m_G-1}{2\alpha_G}\right)}
\times(x\mathcal{K}_G)^{\frac{1}{2\alpha_G}}
\times n^{-\left(\frac{\alpha_G+1}{2\alpha_G}\right)}
\times(\ln n)^{\frac{m_G-1}{2\alpha_G}}\ .
\label{lambdanasymeq}
\end{equation}
Together with the hypothesis on $b_n$ in Proposition \ref{keyprop}, this gives
\[
\lim\limits_{n\rightarrow\infty}\frac{n\lambda_n}{b_n}=\sqrt{\alpha_G-1}\ .
\]
As a result, we have pointwise in $\Theta\in\mathbb{R}$,
\[
\lim\limits_{n\rightarrow\infty} f_n(\Theta)=\exp\left(is\Theta\sqrt{\alpha_G-1}
-\frac{\Theta^2}{2}\right)\ .
\]
Recall that for $n$ large enough, and for $|\Theta|\le t_n\lambda_n^{-1}$, we have the major arc estimate
\[
q_n(x_n,t_n,\lambda_n\Theta)\ge
\eta_{\rm maj}\ \Theta^2\lambda_n^2 t_n^{-\alpha_G-1}(-\ln t_n)^{m_G-1}\ .
\]
Pick some constant $\kappa>0$ such that $\kappa<\eta_{\rm maj}\times(x\alpha_G\mathcal{K}_{G})^{-1}$, then from (\ref{lambdaasymeq}), we see
that
\[
q_n(x_n,t_n,\lambda_n\Theta)\ge \kappa\Theta^2\ .
\]
We showed that for $n$ large enough, we have for all $\Theta\in\mathbb{R}$, $|f_n(\Theta)|\le e^{-\kappa\Theta^2}$. We can now apply the dominated convergence theorem:
\[
\lim\limits_{n\rightarrow\infty}
\int_{\mathbb{R}}f_n(\Theta)\ {\rm d}\Theta
=\int_{\mathbb{R}}e^{is\Theta\sqrt{\alpha_G-1}-\frac{\Theta^2}{2}}\ {\rm d}\Theta
=\sqrt{2\pi}\ e^{-\frac{(\alpha_G-1)s^2}{2}}\ ,
\]
by the formula for the Fourier transform of a Gaussian.

We now consider angles $\theta$ in the minor arc region $\mathcal{R}_{\rm min}$.
We go back to (\ref{reqeq}) and use the at least linear growth hypothesis $a_{\delta}(G)\ge c\delta$ for some constant $c>0$. This allows us to follow~\cite[\S4.2]{AbdesselamS} almost verbatim.
We have
\[
{\rm Re}\ q_n(x_n,t_n,\theta)\ge c\ x_n\ {\rm Re}\ \rho(t_n,\theta)\ ,
\]
with
\begin{eqnarray*}
\rho(u,\theta) & := &\sum_{k=1}^{\infty}e^{-ku}\left(1-e^{ik\theta}\right)\ , \\
 & = & \frac{e^u(e^u-e^{-i\theta}-e^u e^{i\theta}+1)}{(e^{u}-1)(e^{2u}-2e^u\cos \theta+1)}\ .
\end{eqnarray*}
As explained in~\cite[\S4.2]{AbdesselamS}, we have when $\theta\in\mathcal{R}_{\rm min}$,
the lower bound and asymptotic
\[
{\rm Re}\ q_n(x_n,t_n,\theta)\ge x_n
\times\frac{e^{t_n}(e^{t_n}+1)}{e^{t_n}-1}\times
\frac{1-\cos(t_n)}{e^{2t_n}-2e^{t_n}\cos(t_n)+1}
\sim \frac{x}{2t_n}\ .
\]
In view of (\ref{tasymeq}), there exists a constant $\eta_{\rm min}>0$ (possibly depending on $x$) such that for $n$ large enough and all $\theta\in\mathcal{R}_{\rm min}$,
\[
{\rm Re}\ q_n(x_n,t_n,\theta)\ge \eta_{\rm min}\ n^{\frac{1}{\alpha_G}}(\ln n)^{-\left(\frac{m_G-1}{\alpha_G}\right)}\ .
\]
This implies the decay estimate
\[
\left|
\int_{\mathcal{R}_{\rm min}}e^{-q_{\ell,n}(x_n,t_n,\theta)}\ \frac{{\rm d}\theta}{2\pi}\right|
\le \exp\left(-\eta_{\rm min}\ n^{\frac{1}{\alpha_G}}(\ln n)^{-\left(\frac{m_G-1}{\alpha_G}\right)}\right)\ .
\]
The latter is negligible with respect to $\lambda_n$ which follows a power law times a logarithm as shown in (\ref{lambdanasymeq}). As a result, the asmyptotics of the full integral is determined by the portion due to the major arc region $\mathcal{R}_{\rm maj}$:
\begin{eqnarray}
\mathcal{J}_{n}(x,t) & \sim &
\int_{\mathcal{R}_{\rm maj}}e^{-q_{\ell,n}(x_n,t_n,\theta)}\ \frac{{\rm d}\theta}{2\pi}\ , \nonumber\\
 & \sim& \frac{\lambda_n}{2\pi}\sqrt{2\pi}\ e^{-\frac{(\alpha_G-1)s^2}{2}}
\ , \nonumber\\
 & \sim& 
\frac{e^{-\frac{(\alpha_G-1)s^2}{2}}}{\sqrt{2\pi\alpha_G}}
\times{\alpha_G}^{-\left(\frac{m_G-1}{2\alpha_G}\right)}
\times(x\mathcal{K}_G)^{\frac{1}{2\alpha_G}}
\times n^{-\left(\frac{\alpha_G+1}{2\alpha_G}\right)}
\times(\ln n)^{\frac{m_G-1}{2\alpha_G}}\ .
\label{intasymeq}
\end{eqnarray}

\section{Completion of the proof of the main theorem}

We now finish the proof of Proposition \ref{keyprop} and then that of Theorem \ref{mainthm}.
By applying (\ref{intasymeq}) twice, for the given $s\in\mathbb{R}$, and then for the particular case $s=0$ (where $x_n$ becomes the constant sequence equal to $x$), we see that the remainder (\ref{Rneq}) satisfies
\[
\lim\limits_{n\rightarrow\infty} R_n(s)=
-\ \frac{(\alpha_G-1)s^2}{2}\ .
\]
From the hypotheses on the sequences $a_n$ and $b_n$, we immediately see that $A_n$, defined in (\ref{Aneq}), is such that
\[
\lim\limits_{n\rightarrow\infty} A_n=0\ .
\]
From (\ref{Wminuseq}), (\ref{tasymeq}), and the previous remark that $(-\ln t_n)\sim\frac{\ln n}{\alpha_{G}}$, we get
\[
x W_{G,-1}(t_n)\sim
\frac{1}{(\alpha_G-1)}
\times{\alpha_G}^{-\left(\frac{m_G-1}{\alpha_G}\right)}
\times(x\mathcal{K}_G)^{\frac{1}{\alpha_G}}
\times n^{\frac{\alpha_G-1}{\alpha_G}}
\times(\ln n)^{\frac{m_G-1}{\alpha_G}}\ .
\]
The hypothesis on $b_n$ then implies that $B_n$, defined in (\ref{Bneq}), satisfies
\[
\lim\limits_{n\rightarrow\infty} B_n=\alpha_G\ .
\]
Since it asymptotically contains, in comparison to $B_n$, an extra factor $\frac{1}{b_n}$ which goes to zero, the expression
\[
x\left(e^{\frac{s}{b_n}}-1-\frac{s}{b_n}
-\frac{s^2}{2b_n^2}\right)W_{G,-1}(t_n)
\]
also goes to zero as $n\rightarrow\infty$.
As a result, the log-moment generating function given in (\ref{Psineq}) satisfies
\[
\lim\limits_{n\rightarrow\infty}\Psi_n(s)=\frac{s^2}{2}\ ,
\]
and Proposition \ref{keyprop} is established.
\qed

The proof of the main theorem now follows the same steps as in~\cite{AbdesselamS}.
We make two rounds of application of Proposition \ref{keyprop}.
We first pick
\begin{eqnarray*}
a_n &:=& 
x W_{G,-1}\left(
W_{G,0}^{-1}\left(\frac{n}{x}\right)
\right)
\ ,  \\
b_n & := & 
\frac{1}{\sqrt{\alpha_G(\alpha_G-1)}}
\times{\alpha_G}^{-\left(\frac{m_G-1}{2\alpha_G}\right)}
\times(x\mathcal{K}_G)^{\frac{1}{2\alpha_G}}
\times n^{\frac{\alpha_G-1}{2\alpha_G}}
\times(\ln n)^{\frac{m_G-1}{2\alpha_G}}\ .
\end{eqnarray*}
Then Proposition \ref{keyprop} gives the convergence of the moment generating function of the random variables $\frac{K_{G,n}-a_n}{b_n}$ to that of the standard Gaussian. By the continuity theorem of Curtiss (see~\cite[Thm 1.2]{AbdesselamS}), 
this implies the convergence of moments. From the convergence of the first two moments, we immediately conclude that the sequences given by mean and the standard deviation satisfy the hypotheses of Proposition \ref{keyprop}.
We then redefine our $a_n$ and $b_n$ sequences as
\begin{eqnarray*}
a_n & := & \mathbb{E}\mathsf{K}_{G,n}\ , \\
b_n & := & \sqrt{{\rm Var}(K_{G,n})}\ .
\end{eqnarray*}
We then apply Proposition \ref{keyprop} again, followed by the Curtiss continuity theorem, and finally conclude the proof of Theorem \ref{mainthm}.
\qed

\section{Outlook}

It would be interesting to examine the statistics of $K_{G,n}$, for $n$ large, when the function $a_n(G)$ grows faster than a polynomial. Does the variance stay bounded? Would a limit theorem towards a discrete distribution like Poisson be more appropriate than a CLT?
In~\cite{AbdesselamS}, we mentioned right-angled Artin groups $G$ as a direction of generalization away from the Abelian case. However, as soon as one removes one edge in the relevant graph, the $a_n(G)$ grows too fast~\cite{BaikPR}.
The author was thus led to the study for the even ``more commutative'' case of nilpotent groups in the present article.

We used the ad hoc techniques in~\cite{AbdesselamS}, but another possibility would to see if the CLT from~\cite{MaplesNZ} is applicable. This would require checking the Hayman log-admissibility condition which is a nontrivial task we leave to future work.

It would be interesting to investigate, if only numerically for now, log-concavity of the numbers $A(G,n,k)$ with respect to $k$, i.e., the inequalites
\begin{equation}
A(G,n,k)^2\ge A(G,n,k-1)A(G,n,k+1)\ ,
\label{logconceq}
\end{equation}
for $2\le k\le n-1$, and for a variety of groups $G$, e.g., $\mathscr{T}$-groups.
From (\ref{Bantayeq}), one easily sees that
\[
A(G,n,k)=\frac{n!}{k!}
\sum_{n_1,\ldots,n_k\ge 1}
\bbone\{n_1+\cdots+n_k=n\}\times\frac{a_{n_1}(G)\cdots a_{n_k}(G)}{n_1\cdots n_k}\ .
\]
This is the partition function of a polymer gas, in the language of statistical mechanics~\cite{GruberK}. Moreover, the relevant polymer activities only depend on the size of the polymers, and not on their shape or geometrical features. This transformation relating the $a_n(G)$ to the $A(G,n,k)$ is also called a Bell transform~\cite{Pitman}. 
The author likes to see this as a kind of non-linear
averaging of the $a_n(G)$. For $G=\mathbb{Z}^{\ell}$, the log-concavity was conjectured by Heim and Neuhauser for $\ell=2$~\cite{HeimN}.
This log-concavity conjecture was extended to all $\ell\ge 2$ by the author~\cite{AbdesselamAC}.
In the remarkable work~\cite{Starr} concerning the $\ell=2$ case, Starr was able to establish (\ref{logconceq}) in the $n\rightarrow\infty$ limit with $k$ fixed, when $k\ge 3$. However, for $\ell=2$ and $k=2$, Starr showed that (\ref{logconceq}) fails for $n$ large. Thus the conjectures by Heim-Neuhauser and the author must be amended to exclude values of $k$ that are too low. It seems the mentioned averaging must be non-linear enough in order to see log-concavity.
Finally, if log-concavity is supported by numerical evidence, it would be interesting to look at what number-theoretical consequences this log-concavity could have.

\bigskip
\noindent{\bf Acknowledgements:}
{\small
For useful correspondence or quick in person discussion,
we thank Mikhail Ershov, Ofir Gorodetsky, Bernhard Heim,
Thomas Koberda, Vyacheslav Krushkal,
Michael Magee, Markus Neuhauser,
Doron Puder, and Gerald Tenenbaum. 
We thank Ramon van Handel for insightful discussions on subgroup growth and random coverings. We thank Paul Anderson for the related ongoing work~\cite{AbdesselamA}.
Last but not least, we thank Shannon Starr for the collaboration~\cite{AbdesselamS} where many of the techniques used in this article were developed.
} 

\end{document}